\theoremstyle{plain}
\newtheorem{thm}{Theorem}[section]
\newtheorem{cor}[thm]{Corollary}
\newtheorem{prop}[thm]{Proposition}
\newtheorem{lem}[thm]{Lemma}
\theoremstyle{definition}
\newtheorem{defn}[thm]{Definition}
\newtheorem{eg}[thm]{Example}
\newtheorem{ques}[thm]{Question}
\theoremstyle{remark}
\newtheorem{rem}[thm]{Remark}
\begin{document}

\title{ spectrums of equivalent Schauder operators}
\title{ spectrums of equivalent Schauder operators}
\author[L. Y. Shi]{Luo Yi Shi}

\address{Department of Mathematics, Tianjin Polytechnic University, Tianjin, 300160,CHINA}

\email{sluoyi@yahoo.cn}

\author[Y. Cao]{Yang Cao}

\address{Institution of Mathematics, Jilin University, Changchun, 130012, China}

\email{caoyang@jlu.edu.cn}

\author[G. Tian]{Geng Tian}

\address{Institution of Mathematics, Jilin University, Changchun, 130012, China}

\email{tiangeng09@mails.jlu.edu.cn}

 \keywords{}

\subjclass[2000]{Primary 47B37, 47B99; Secondary 54H20, 37B99}

\begin{abstract}
 Assume that $T_1,T_2$ are equivalent Schauder operators. In this paper,
 we show that even in this case
 their Schauder spectrum may be very different in the view of operator theory. In fact, we get that if a
 self-adjoint Schauder
 operator $A$ has more than one points in its essential spectrum
$\sigma_e(A)$, then there exists a unitary spread operator $U$ such
that the Schauder spectrum $\sigma_S(UA)$ contains a ring which is
depended by the essential spectrum; if there is only one point in
$\sigma_e(A)$ and satisfies some conditions then there exists a
unitary spread operator $U$ such that the Schauder spectrum
$\sigma_S(UA)$ contains the circumference which is depended by the
essential spectrum.

\end{abstract}

\maketitle

\section{Introduction}

In their paper \cite{Cao}, Cao give an operator theory description
of bases on a separable Hilbert space $\mathcal{H}$. To study
operators on $\mathcal{H}$ from a basis theory viewpoint, it is
naturel to consider the behavior of operators related by equivalent
bases. For examples, they show that there always be some strongly
irreducible operators in the orbit of equivalent Schauder
matrices(\cite{Cao Yang 2}). However, in the usual way a spectral
method consideration of operators in the equivalent orbit is also
important to the joint research both on operator theory and Schauder
bases. Cao introduces the conception {\it Schauder spectrum} to do
this work. The main purpose of this paper is to show that the
Schauder spectrum of Schauder operators in a given orbit can be very
different.

Recall that a sequence of vectors $\{f_{n}\}_{n=1}^{\infty}$ in
$\mathcal{H}$ is said to be a {\it Schauder basis}
\cite{Singer,McArthur} for $\mathcal{H}$ if every element
$f\in\mathcal{H}$ has a unique series expansion $f=\sum c_nf_n$
which converges in the norm of $\mathcal{H}$. If $\{f_n\}$ is
Schauder basic for $\mathcal{H}$, the {\it sequence space associated
with} $\{f_n\}$ is defined to be the linear space of all sequences
$\{c_n\}$ for which $f=\sum c_nf_n$ is convergent. Two Schauder
bases $\{f_n\}_{n=1}^{\infty}$ and $\{g_n\}_{n=1}^{\infty}$ are {\it
equivalent} to each other if they have the same sequence space(cf,
\cite{Singer}, definition 12.1, p131, \cite{Garling}, p163). Denote
by $\omega$ the countable infinite cardinal. In paper \cite{Cao
Yang}, Cao.e.t considered the $\omega\times\omega$ matrix whose
column vectors comprise a Schauder basis and call them the {\it
Schauder matrix}. An operator has a Schauder matrix representation
under some ONB is called a {\it Schauder operator}. Given an
orthonormal basis(ONB in short) $\varphi=\{e_n\}_{n=1}^{\infty}$,
the vector $f_{n}$ in a Schauder basis sequence
$\psi=\{f_n\}_{n=1}^{\infty}$ corresponds an $l^{2}$ sequence
$\{f_{mn}\}_{m=1}^{\infty}$ defined uniquely by the series
$f_{n}=\sum_{m=1}^{\infty} f_{mn}e_{m}$. The matrix
$F_{\psi}=(f_{mn})_{\omega \times \omega}$ is called the {\it
Schauder matrix} of basis $\psi$ under the ONB $\varphi$.

Assume that $\psi_{1}, \psi_{2}$ are equivalent Schauder bases and $T_{\psi_{1}}, T_{\psi_{2}}$ are the
operators defined by Schauder matrices $F_{\psi_{1}}$ and $F_{\psi_{2}}$ respectively
under the same ONB.
Then there are no difference between  $\psi_1$
and $\psi_2$ from the view of bases of the Hilbert space. Are there
some notable differences between the operators $T_{\psi_{1}}$ and $T_{\psi_{2}}$ from
the view of operator theory? From the Arsove's theorem(\cite{Arsove}, or theorem 2.12 in \cite{Cao Yang}), there is some invertible
operator $X\in L(\mathcal{H})$ such that $XT_{\psi_{1}}= T_{\psi_{2}}$ holds. Hence
for a Schauder basis $\psi=\{f_{n}\}_{n=1}^{\infty}$, the set defined as
$$
\mathcal{O}_{gl}(\psi)=\{X\psi; X \in gl(\mathcal{H})\}
$$
in which $X\psi=\{Xf_{n}\}_{n=1}^{\infty}$ and $gl(\mathcal{H})$ consists of all invertible operators in $L(\mathcal{H})$
contains exactly all equivalent bases to $\psi$. Moreover, the set
$$
\mathcal{O}_{gl}(F_{\psi})=\{M_{X}F_{\psi}; M_{X} \hbox{ is the matrix of some operator }X \in gl(\mathcal{H})\}
$$
consists of all Schauder matrix equivalent to $F_{\psi}$. In the operator level, we define
$$
\mathcal{O}_{gl}(T_{\psi})=\{XT_{\psi}; X \in gl(\mathcal{H})\}.
$$
Then the set $\mathcal{O}_{gl}(T_{\psi})$ consists of operators related to bases equivalent to $\psi$.
Similarly, we consider following sets:
$$
\begin{array}{c}
\mathcal{O}_{u}(\psi)=\{U\psi; U \in U(\mathcal{H})\}, \\
\mathcal{O}_{u}(F_{\psi})=\{M_{U}F_{\psi}; M_{U} \hbox{ is the matrix of some unitary operator }U\}, \\
\mathcal{O}_{u}(T_{\psi})=\{UT_{\psi}; U \in U(\mathcal{H})\},
\end{array}
$$
where $U(\mathcal{H})$ consists of all unitary operators in
$L(\mathcal{H})$. Roughly speaking, by these set we bind operators
related to equivalent bases of the basis $\psi$ with the same basis
const. It is easy to check that a Schauder operator $T_{\psi}$ must be injective and having
a dense range. Denote by $T_{\psi} = UA_{\psi}$  the
polar decomposition of $T_{\psi}$, then the partial isometry $U$
must be a unitary operator. Then the orbit $\mathcal{O}_{u}(T_{\psi})$ is just the orbit $\mathcal{O}_{u}(A_{\psi})$
in which $A_{\psi}$ is the self-adjoint operator defined by the polar decomposition of $T_{\psi}$. In this paper
we focus on unitary operators with a nice basis theory understanding, that is, a slight generalization of
\textsl{spread form} defined by  W. T. Gowers and B. Maurey(\cite{Gowers 1}, \cite{Gowers 2}).

For a complex number $\lambda, \lambda$ will be called in the {\it
Schauder spectrum} of $T$ denoted by $\sigma_S(T)$ if and only if
there is no ONB such that $\lambda I-T$ has a matrix representation
as a Schauder matrix. It is obviously,
$\sigma(T)\supset\sigma_S(T)=\sigma_p(T)\cup\sigma_r(T)$ in which
$\sigma_r(T)=\{\lambda\in\mathbb{C}, \overline{\textup{Ran}(\lambda
I-T)}\neq\mathcal{H}\}$.

Now we state our main theorem:
\begin{thm}
Assume that $A$ is a self-adjoint Schauder operator.

\textup{(i)} If $\sigma(A)\subseteq[\lambda_1, \lambda_2],
\lambda_1>0$ and $\lambda_1,\lambda_2\in \sigma_e(A)$, then there
exists a unitary spread operator $U$ such that the Schauder spectrum
 $\sigma_S(UA)\supseteq R$ for any rings $R$ in the ring $R_{\lambda_1,
\lambda_2}^o$;

\textup{(ii)} If $\lambda_1,\lambda_2\in \sigma_e(A)$ and
$0<\lambda_1<\lambda_2$, then there exists a unitary spread operator
$U$ such that the Schauder spectrum $\sigma_S(UA)\supseteq R$ for
any rings $R$ in the ring $R_{\lambda_1, \lambda_2}^o$;

\textup{(iii)} If there exists only one point $\lambda_1\in
\sigma_e(A)$,  $\{t_k\}$ and $\{r_k\}$ contained in $\sigma(A)$
 and satisfy that $t_k <t_{k+1}, r_k > r_{k+1}$, $t_k\rightarrow
\lambda_1, r_k\rightarrow \lambda_1$, and
$\sum_{n=1}^\infty\prod_{k=1}^n(\frac{t_k}{\lambda_1})^2<\infty$,
$\sum_{n=1}^\infty\prod_{k=1}^n(\frac{\lambda_1}{r_k})^2<\infty$.
Then there exists a unitary spread operator $U$ such that  the
Schauder spectrum $\sigma_S(UA)\supseteq\{\lambda,
|\lambda|=\lambda_1\}$.

\end{thm}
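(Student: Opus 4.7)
The plan is to use the essential-spectrum points of $A$ to construct an orthonormal sequence on which $UA$ behaves as a weighted bilateral shift, and then invoke the classical recursion for eigenvectors of such a shift. Because $\sigma_S(T)=\sigma_p(T)\cup\sigma_r(T)$, it is enough to produce, for each target $\lambda$, either a nonzero solution of $(UA-\lambda)x=0$ or a nonzero solution of $(UA-\lambda)^{\ast}y=0$.

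For parts (i) and (ii), since $\lambda_1,\lambda_2\in\sigma_e(A)$ the spectral projections $E_A(B(\lambda_j,\varepsilon))$ have infinite rank for every $\varepsilon>0$. Using these I extract an orthonormal sequence $\{v_n\}_{n\in\mathbb Z}$ and scalars $w_n\in\sigma(A)$ with $\|Av_n-w_nv_n\|$ summably small and with $w_n\to\lambda_1$ as $n\to+\infty$, $w_n\to\lambda_2$ as $n\to-\infty$. Complete $\{v_n\}$ to an ONB of $\mathcal H$ and define $U$ to act as the bilateral shift $Uv_n=v_{n+1}$ on this basis, extended compatibly on the orthogonal complement; this $U$ is unitary and of spread form in the sense of Gowers--Maurey. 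Then $UAv_n=w_nv_{n+1}+o(1)$, so the equation $(UA)x=\lambda x$ with $x=\sum c_nv_n$ reduces, up to the controlled perturbation, to the recursion $w_{n-1}c_{n-1}=\lambda c_n$, whose solutions are $c_n=c_0\prod_{k=0}^{n-1}(w_k/\lambda)$ for $n>0$ and $c_n=c_0\prod_{k=n}^{-1}(\lambda/w_k)$ for $n<0$. Both tails decay geometrically precisely when $\lambda_1<|\lambda|<\lambda_2$, yielding an $\ell^2$ eigenvector and hence $\lambda\in\sigma_p(UA)\subseteq\sigma_S(UA)$; letting $\varepsilon\to 0$ covers the full open annulus $R_{\lambda_1,\lambda_2}^o$.

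For part (iii) the essential spectrum has only the point $\lambda_1$, but by hypothesis there are isolated eigenvalues $\{t_k\}\uparrow\lambda_1$ and $\{r_k\}\downarrow\lambda_1$ of $A$, each admitting an honest orthonormal eigenvector. Using these as $v_{-k},v_k$ and repeating the construction yields a spread unitary $U$ with \emph{exact} weights $w_{-k}=t_k,\;w_k=r_k$. The $\ell^2$ condition on the recursion coefficients for $|\lambda|=\lambda_1$ becomes exactly $\sum_n\prod_{k=1}^n(t_k/\lambda_1)^2<\infty$ and $\sum_n\prod_{k=1}^n(\lambda_1/r_k)^2<\infty$, namely the stated summability hypothesis, so every $\lambda$ on the circle $|\lambda|=\lambda_1$ lies in $\sigma_p(UA)$.

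The main difficulty is the construction of $U$: it must genuinely belong to the spread class rather than being an ad hoc permutation, which forces a coherent choice of ONB and a careful extension of the shift to the orthogonal complement. Secondary to this, in parts (i) and (ii) the approximation errors $\|Av_n-w_nv_n\|$ must be chosen along a diagonal sequence tending to zero fast enough that the idealized weighted-shift eigenvector persists as a genuine eigenvector of $UA$; this is a standard perturbation estimate once the geometric decay of the $c_n$ is in place.
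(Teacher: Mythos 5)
Your part (iii) is essentially the paper's argument and is sound: there the $t_k$ and $r_k$ are isolated points of $\sigma(A)\setminus\sigma_e(A)$, hence honest eigenvalues with honest eigenvectors, the closed span of one chosen unit eigenvector per weight is a reducing subspace, the shifted recursion is exact, and the stated summability hypotheses are precisely the $\ell^2$ condition on the circle $|\lambda|=\lambda_1$.

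For parts (i) and (ii), however, there is a genuine gap at the step you dismiss as ``a standard perturbation estimate.'' Your $v_n$ are only approximate eigenvectors, so $\mathrm{span}\{v_n\}$ is not invariant under $A$: writing $Av_n=w_nv_n+\eta_n$ with $\|\eta_n\|=\varepsilon_n$, the candidate vector $x=\sum c_nv_n$ built from the recursion $\lambda c_{n+1}=w_nc_n$ satisfies $(UA-\lambda)x=\sum c_nU\eta_n$, which you can make small but not zero. This exhibits $\lambda$ as an approximate eigenvalue, i.e.\ $\lambda\in\sigma_{ap}(UA)\subseteq\sigma(UA)$, but the Schauder spectrum is $\sigma_S(UA)=\sigma_p(UA)\cup\sigma_r(UA)$, and approximate point spectrum implies neither of these (think of purely continuous spectrum). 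So the conclusion $\lambda\in\sigma_S(UA)$ does not follow from your construction, no matter how fast the $\varepsilon_n$ decay. The paper avoids this entirely: since $\lambda_1,\lambda_2\in\sigma_e(A)$, the spectral projections $E_{I_n}$ onto suitably chosen shrinking intervals (or clusters of eigenvalues) near $\lambda_1$ and $\lambda_2$ have infinite rank, their ranges $H_n$ are \emph{exactly} reducing for $A$, and $U$ is taken to be a block bilateral shift $Ue_k^{(n)}=e_k^{(n+1)}$ matching ONBs of the $H_n$. Then the eigenvector recursion $\lambda x^{(n)}=UA_{n-1}x^{(n-1)}$ closes up exactly inside $\bigoplus H_n$, with $\|x^{(n)}\|$ controlled above and below by the endpoints of $I_{n-1},\dots,I_0$, and the annulus of genuine eigenvalues drops out. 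To repair your proof you should replace the single approximate eigenvectors by these infinite-dimensional spectral blocks; the scalar weighted-shift heuristic then becomes a rigorous operator-valued recursion and the rest of your computation goes through unchanged. (Your claim that the resulting $U$ is a unitary spread is fine, since it permutes an ONB, which the paper's cited result covers.)
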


That is, if $T$ is a Schauder operator, then there exist operator
$T_1 \in \mathcal{O}_{u}(T)$ such that $\sigma_S(T_1)$ has a certain thickness. Related concept will be
clear in later section.

We organize our paper as follows. In section 2, we introduce some
notations and  lemmas which will be used in the main theorem; in
section 3, we research the case that the spectrum of self-adjoint
Schauder operator has only two points; In section 4 we research the
case that the essential spectrum of self-adjoint Schauder operator
has only two points; In section 5, we research the case that there
is no point spectrum in the spectrum of self-adjoint Schauder
operator. At last, we get that if $A$ is a self-adjoint Schauder
operator with at least two essential spectrum, then exists $UA\in
\mathcal{O}_{u}(A)$ such that $\sigma_S(A)$ is thin and
$\sigma_S(UA)$ has a certain thickness.

\begin{rem}
In the seminar held at Jilin university, Cao shows that for a Schauder
operator $T$ there must be some unitary spread $U$ such that the Schauder operator
$UT$ has an empty Schauder spectrum. In this sense, our result in this paper
show that the Schauder spectrum of $UT$ may be very bad.
\end{rem}

%The authors of \cite{Cao Yang} have proved that even in the case
%$T_1 = UT_2$, their spectrum may be very different. In fact, they
%obtained that assume  $A$ is a self-adjoint operator such that
%$0\notin\sigma_p(A)$, then there is some unitary spread operator $U$
%such that $\sigma_p(UA)=\sigma_p(AU^*)=\emptyset$. That is to say,
%there exists two equivalent Schauder bases that the corresponding
%operators $T_1, T_2$ satisfy  $\sigma_p(T_1)$ located on the real
%line and non-empty, $\sigma_p(T_2)=\emptyset$.

\section{Notation and auxiliary results}

In this section we will introduce some notation  for convenience,
and some lemmas which will be used in the main theorem.

Throughout this paper, let $R_{\lambda_1, \lambda_2}=\{\lambda,
\lambda_1\leq|\lambda|\leq\lambda_2\}$, $R_{\lambda_1}=\{\lambda,
|\lambda|=\lambda_1\}$, $R_{\lambda_2}=\{\lambda,
|\lambda|=\lambda_2\}$ and $R_{\lambda_1, \lambda_2}^o=\{\lambda,
\lambda_1<|\lambda|<\lambda_2\}$ for $0<\lambda_1<\lambda_2$. If $E$
is a subset of complex plane $\mathbb{C}$ and $0\notin E$, let
$E^{-1}=\{\lambda, \frac{1}{\lambda}\in \textup{E}\}$,
$\textup{Card}\{E\}$ denote the cardinal number of $E$.

Recall the definition of the {\it spread from $A$ to $B$} given by
W. T. Gowers and B. Maurey.

\begin{defn}(\cite{Gowers 2}, p549)
Given an ONB $\{e_n\}_{n=1}^\infty$ and two infinite subsets $A,B$
of $\mathbb{N}$. Let $c_{00}$ be the vector space of all sequences
of finite support. Let the elements of $A$ and $B$ be written in
increasing order respectively as $\{a_1, a_2, \cdots \}$ and $\{b_1,
b_2, \cdots \}$. Then $e_n$ maps to $0$ if $n \notin A$, and
$e_{a_k}$ maps to $e_{b_k}$ for every $k \in N$. Denote this map by
$S_{A,B}$ and call it the spread from $A$ to $B$.
\end{defn}

Using spread forms, we can write some unitary operator into their
linear combination. See the Example 4.13 in \cite {Cao Yang}.

\begin{defn}(\cite{Cao Yang}, Definition 4.14)
A unitary operator $U$ on $\mathcal{H}$ is said to be a unitary
spread if there is a sequence $\{S_{A_n,B_n}\}_{n=1}^\infty$ of
spreads such that the series $\sum_{n=1}^\infty S_{A_n,B_n}$
converges to $U$ in strongly operator topology (SOT). Moreover, $U$
will be called a finite unitary spread if $U$ can be written as a
finite linear combination.
\end{defn}

In the paper \cite{Cao}, Cao.e.t proved that for each bijection
$\sigma$ on the set $\mathbb{N}$, the unitary operator $U_{\sigma}$
is a unitary spread.

\begin{lem}\label{bj}
Assume that $A$ is a self-adjoint operator satisfying that
$\sigma(A)\subseteq[\lambda_1, \lambda_2]$, $\lambda_1>0$, and there
exists $x\neq 0$ such that $||Ax||=\lambda_i||x||$. Then
$\lambda_i\in\sigma_p(A)$, and $x\in\textup{Ker}(\lambda_iI-A)$,
$i=1,2$.
\end{lem}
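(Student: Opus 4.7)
The plan is to translate the norm equality $\|Ax\|=\lambda_i\|x\|$ into an operator identity involving $A^2-\lambda_i^2 I$ and exploit the positivity (or negativity) of this operator, which follows from the spectral containment $\sigma(A)\subseteq[\lambda_1,\lambda_2]$.

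First I would observe that since $A$ is self-adjoint and $\sigma(A)\subseteq[\lambda_1,\lambda_2]$ with $\lambda_1>0$, the spectral mapping theorem gives $\sigma(A^2)\subseteq[\lambda_1^2,\lambda_2^2]$. Consequently, as bounded self-adjoint operators,
\[
A^2-\lambda_1^2 I \;\ge\; 0 \qquad\text{and}\qquad \lambda_2^2 I - A^2 \;\ge\; 0.
\]
Next I would rewrite the hypothesis: $\|Ax\|^2=\lambda_i^2\|x\|^2$ is exactly $\langle (A^2-\lambda_i^2 I)x,x\rangle=0$. For $i=2$ this reads $\langle (\lambda_2^2 I-A^2)x,x\rangle=0$, and for $i=1$ it reads $\langle (A^2-\lambda_1^2 I)x,x\rangle=0$. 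Since in each case the operator in question is positive, I can take its positive square root $B_i$ and note $\|B_i x\|^2=\langle B_i^2 x,x\rangle=0$, so $B_ix=0$, whence $(A^2-\lambda_i^2 I)x=0$.

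Finally I would factor $A^2-\lambda_i^2 I=(A-\lambda_i I)(A+\lambda_i I)$. Because $\sigma(A)\subseteq[\lambda_1,\lambda_2]$ with $\lambda_1>0$, we have $-\lambda_i\notin\sigma(A)$, so $A+\lambda_i I$ is invertible. Setting $y=(A+\lambda_iI)x$, the identity $(A-\lambda_iI)y=0$ combined with invertibility... actually the cleaner route is to rewrite the factorisation as $(A+\lambda_iI)(A-\lambda_iI)x=0$, deduce $(A-\lambda_iI)x\in\ker(A+\lambda_iI)=\{0\}$, and conclude $(A-\lambda_iI)x=0$. Since $x\ne 0$, this gives $\lambda_i\in\sigma_p(A)$ with $x\in\ker(\lambda_i I-A)$, as required.

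The argument is essentially linear-algebraic once the positivity step is in place; I do not foresee any real obstacle, though a referee might prefer that I present the same conclusion via the spectral theorem, writing $\|Ax\|^2=\int t^2\,d\mu_x(t)$ and $\|x\|^2=\int d\mu_x(t)$ and observing that $(t^2-\lambda_i^2)$ is of constant sign on $[\lambda_1,\lambda_2]$, which forces $\mu_x$ to be supported on $\{\lambda_i\}$, i.e.\ $E(\{\lambda_i\})x=x$. Either presentation yields the same conclusion, and the only subtlety worth flagging is the use of $\lambda_1>0$ to guarantee invertibility of $A+\lambda_iI$ (equivalently, to ensure that $A^2-\lambda_i^2 I$ and $A-\lambda_i I$ have the same kernel).
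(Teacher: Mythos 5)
Your proof is correct. It takes a slightly different route from the paper's: the paper expands $\|(\lambda_1 I-A)x\|^2=\lambda_1^2\|x\|^2+\|Ax\|^2-2\lambda_1(Ax,x)$ directly, uses the quadratic-form bound $(Ax,x)\ge\lambda_1\|x\|^2$ coming from $\sigma(A)\subseteq[\lambda_1,\lambda_2]$ to force this to be $\le 0$, and then disposes of the $i=2$ case by applying the $i=1$ argument to $A^{-1}$ (the direct expansion fails for the upper endpoint, since Cauchy--Schwarz gives the inequality in the wrong direction there). You instead pass to $A^2$, note $\langle(A^2-\lambda_i^2I)x,x\rangle=0$ with $A^2-\lambda_1^2I\ge0$ resp.\ $\lambda_2^2I-A^2\ge0$, kill $(A^2-\lambda_i^2I)x$ via the positive square root, and then factor $(A+\lambda_iI)(A-\lambda_iI)x=0$ using invertibility of $A+\lambda_iI$. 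Each step is sound. What your version buys is symmetry: both endpoints are handled by the same argument, with the role of $\lambda_1>0$ isolated in the single step $\ker(A+\lambda_iI)=\{0\}$; what the paper's version buys is brevity for $i=1$ (a one-line norm computation with no square roots or factorisation), at the cost of an asymmetric reduction for $i=2$. Your closing remark about the spectral-measure formulation is the same observation in yet another dress and is equally valid.
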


\begin{proof}
Indeed we only need to prove the case of $i=1$. The proof of the
case of $i=2$, is minor modifications of the proof of the analogous
statements in the case of $i=1$ by consider $A^{-1}$ and will be
omitted.

Since $\sigma(A)\subseteq[\lambda_1, \lambda_2]$, we know that $(Ax,
x)\geq\lambda_1||x||$ for any $x\neq 0$. Hence,
$||(\lambda_1I-A)x||^2=\lambda_1^2||x||+||Ax||^2-2\lambda_1(Ax,
x)\leq 0$, it follows that $||(\lambda_1I-A)x||=0$. That is to say
$x\in\textup{Ker}(\lambda_1I-A)$ and $\lambda_1\in\sigma_p(A)$.
\end{proof}

\begin{lem}\label{propbj}
Assume that $A$ is a self-adjoint operator satisfying that
$\sigma(A)\subseteq[\lambda_1, \lambda_2]$, $\lambda_1>0$. Then,
 for any unitary operator $U$,

\textup{(i)} $\sigma(UA)\subseteq R_{\lambda_1, \lambda_2}$;

\textup{(ii)} If $\lambda_1,\lambda_2\notin\sigma_p(A)$, then
$\sigma_p(UA)\cap R_{\lambda_i}=\emptyset$;if
$\lambda_1,\lambda_2\in\sigma_p(A)$, then
$\textup{Card}\{\sigma_p(UA)\cap R_{\lambda_i}\}\leq\textup{dim
Ker}(\lambda_iI-A), i=1,2$.
\end{lem}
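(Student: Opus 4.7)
The plan is to handle part (i) by a direct norm estimate. Since $\sigma(A)\subseteq[\lambda_1,\lambda_2]$ with $\lambda_1>0$, spectral theory for self-adjoint operators gives $\|A\|\leq\lambda_2$ and $A$ is invertible with $\|A^{-1}\|\leq1/\lambda_1$. Because $U$ is unitary, $\|UA\|\leq\lambda_2$, so $\sigma(UA)\subseteq\{|\lambda|\leq\lambda_2\}$. Similarly $UA$ is invertible with $(UA)^{-1}=A^{-1}U^*$, and $\|(UA)^{-1}\|\leq1/\lambda_1$, which yields $\sigma(UA)\subseteq\{|\lambda|\geq\lambda_1\}$. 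Intersecting gives $\sigma(UA)\subseteq R_{\lambda_1,\lambda_2}$.

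For part (ii), I would start from the key observation that any eigenvalue of $UA$ on the boundary circle forces the corresponding eigenvector to live in a spectral edge eigenspace of $A$. Concretely, suppose $\lambda\in\sigma_p(UA)\cap R_{\lambda_i}$ and pick $x\neq0$ with $UAx=\lambda x$. Since $U$ preserves norms, $\|Ax\|=\|UAx\|=|\lambda|\|x\|=\lambda_i\|x\|$. Lemma~\ref{bj} then gives $x\in\textup{Ker}(\lambda_iI-A)$, i.e.\ $Ax=\lambda_ix$. If $\lambda_i\notin\sigma_p(A)$ this kernel is trivial, contradicting $x\neq0$; hence $\sigma_p(UA)\cap R_{\lambda_i}=\emptyset$, settling the first half of (ii).

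For the cardinality bound when $\lambda_i\in\sigma_p(A)$, I would exploit the same identity one step further. From $Ax=\lambda_ix$ and $UAx=\lambda x$ we deduce $Ux=(\lambda/\lambda_i)x$, so $x$ is simultaneously an eigenvector of $A$ (for $\lambda_i$) and of $U$ (for the unimodular scalar $\lambda/\lambda_i$). Given distinct points $\lambda^{(1)},\lambda^{(2)},\ldots\in\sigma_p(UA)\cap R_{\lambda_i}$, the corresponding $U$-eigenvalues $\lambda^{(k)}/\lambda_i$ are distinct and of modulus one; since $U$ is unitary, their eigenvectors are mutually orthogonal. All of these eigenvectors lie in $\textup{Ker}(\lambda_iI-A)$, so their number is bounded by $\dim\textup{Ker}(\lambda_iI-A)$, which is the claimed inequality.

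The main technical point worth flagging is the passage in (ii) from the scalar equation $UAx=\lambda x$ to the joint eigenvector description: one must use both the isometry property of $U$ (to extract $\|Ax\|=\lambda_i\|x\|$ and invoke Lemma~\ref{bj}) and the unitarity of $U$ (to get orthogonality of eigenvectors belonging to distinct unimodular $U$-eigenvalues). Beyond that the argument is routine and relies only on (i), Lemma~\ref{bj}, and standard facts about unitary operators.
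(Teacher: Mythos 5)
Your proof is correct, and while part (i) and the ``empty intersection'' half of (ii) coincide with the paper's argument (norm estimates on $UA$ and $(UA)^{-1}$ for (i); the identity $\|Ax\|=\|UAx\|=\lambda_i\|x\|$ plus Lemma~\ref{bj} for the reduction to $\textup{Ker}(\lambda_iI-A)$), your treatment of the cardinality bound is genuinely different and, in my view, cleaner. The paper proceeds by writing $A$ and $U$ as $2\times 2$ operator matrices relative to $\textup{Ker}(\lambda I-UA)\oplus\textup{Ker}(\lambda I-UA)^{\perp}$, checking that the off-diagonal blocks of $U$ vanish, and then iterating this block reduction for each further eigenvalue $\delta$ on the circle $R_{\lambda_i}$, accumulating mutually orthogonal subspaces of $\textup{Ker}(\lambda_iI-A)$ one at a time. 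You instead observe that $Ax=\lambda_ix$ together with $UAx=\lambda x$ forces $Ux=(\lambda/\lambda_i)x$, so each boundary eigenvalue of $UA$ tags its eigenvector with a distinct unimodular eigenvalue of the unitary $U$; orthogonality of eigenvectors of a normal operator for distinct eigenvalues then delivers the bound in one step, with no induction and no matrix decomposition. Both arguments ultimately rest on the same orthogonality phenomenon inside $\textup{Ker}(\lambda_iI-A)$, but yours makes the mechanism explicit (a joint eigenvector of $A$ and $U$) and handles all eigenvalues simultaneously, whereas the paper's iteration is somewhat informal at the ``repeating the above process'' step, particularly if the set of circle eigenvalues is infinite. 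One could add a half-sentence noting that a family of pairwise orthogonal nonzero vectors in $\textup{Ker}(\lambda_iI-A)$ has cardinality at most $\dim\textup{Ker}(\lambda_iI-A)$, but this is standard and does not constitute a gap.
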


\begin{proof}
(i) It is well known that if $T$ is an invertible operator, then
$\sigma(T^{-1})=\{\lambda, \lambda^{-1}\in \sigma(T)\}$, and
$r(T)\leq ||T||$ for any $T\in B(\mathcal{H})$. Thus
$\sigma(UA)=\frac{1}{\sigma((UA)^{-1})}$ and
$||UA||=||A||=\lambda_2,
||(UA)^{-1}||=||A^{-1}||=\frac{1}{\lambda_1}$. It follows that
$\lambda\notin \sigma(UA)$ when $|\lambda|>\lambda_2$ and
$\lambda\notin \sigma((UA)^{-1})$ when
$|\lambda|>\frac{1}{\lambda_1}$. Hence, $\sigma(UA)\subseteq
R_{\lambda_1, \lambda_2}$, for any unitary operator $U$.

(ii)Indeed we only need to prove the case of $i=1$. The proof of the
case of $i=2$, is minor modifications of the proof of the analogous
statements in the case of $i=1$ by consider $A^{-1}$ and will be
omitted.

 Assume $U$ is a unitary operator and
$\lambda\in\sigma_p(UA)\cap R_{\lambda_1}$. Then there exists $x\neq
0$ such that $UAx=\lambda x$ and $||Ax||=||UAx||=\lambda_1||x||$. By
Lemma \ref{bj}, $\lambda_1\in\sigma_p(A)$ and
$x\in\textup{Ker}(\lambda_1I-A)$. Hence, $A$ and $U$ have the matrix
forms
$$A= \left[\begin{array}{cc}
\lambda_1I&\\
&A_1
\end{array}\right
]
\begin{matrix}
\mbox{$\textup{Ker}(\lambda I-UA)$}\\
\mbox{$\textup{Ker}(\lambda I-UA)^{\perp}$}\\
\end{matrix}
,
U= \left[\begin{array}{cc}
U_{11}&U_{12}\\
U_{21}&U_{22}
\end{array}\right
]
\begin{matrix}
\mbox{$\textup{Ker}(\lambda I-UA)$}\\
\mbox{$\textup{Ker}(\lambda I-UA)^{\perp}$}\\
\end{matrix},$$
and $\textup{Ker}(\lambda I-UA)\subseteq
\textup{Ker}(\lambda_1I-A)$.

For any $x\in\textup{Ker}(\lambda I-UA)$, $UAx=\lambda x$. Since $U$
is a  unitary operator, it is easy to check that $U_{12}=U_{21}=0$.
Hence, $UA$ has the matrix form
$$UA= \left[\begin{array}{cc}
\lambda_1U_{11}&\\
&U_{22}A_1
\end{array}\right
]
\begin{matrix}
\mbox{$\textup{Ker}(\lambda I-UA)$}\\
\mbox{$\textup{Ker}(\lambda I-UA)^{\perp}$}\\
\end{matrix}
,$$ in which $U_{11}$ and $U_{22}$ are unitary operators and
$$\sigma_P(UA)=\sigma_P(\lambda_1U_{11})\cup\sigma_P(U_{22}A_1),$$
$$\textup{Card}\{\sigma_P(\lambda_1U_{11})\}\leq\textup{dim
Ker}(\lambda I-UA)\leq\textup{dim Ker}(\lambda_1I-A).$$

If there exists another $\delta\in\sigma_p(U_{22}A_1)\cap
R_{\lambda_1}$, repeating the above process, we can get that
$\textup{Ker}(\delta I-U_{22}A_1)\subseteq
\textup{Ker}(\lambda_1I-A)$ and $\textup{Ker}(\delta
I-U_{22}A_1)\bot \textup{Ker}(\lambda I-UA)$.

Repeating the above process, we can obtain that
$\textup{Card}\{\sigma_p(UA)\cap R_{\lambda_1}\}\leq\textup{dim
Ker}(\lambda_1I-A).$

\end{proof}

\begin{rem}
By the above lemma, we know that if the spectrum $\sigma(A)$ of a
self-adjoint Schauder operator is contained in an interval, then the
Schauder spectrum $\sigma_S(UA)$ must be contained in the ring which
is depended by the interval.
\end{rem}

\section{Only two points in $\sigma(A)$}

In this section, we will research the case that the spectrum of
self-adjoint Schauder operator $A$ has only two points $\lambda_1,
\lambda_2$ and $0<\lambda_1<\lambda_2$.

According to Lemma \ref{propbj}, we know that for any unitary
operator $U$, there exists at most denumerable subsets $\sigma_1$ in
$R_{\lambda_1}$ and $\sigma_2$ in $R_{\lambda_2}$ such that
$\sigma_p(UA)\subseteq\sigma_1\cup\sigma_2\cup R_{\lambda_1,
\lambda_2}^o$. In this section, we will show that if
ker$(\lambda_i-A)=\infty, i=1,2$, then for
 any at most denumerable subsets $\sigma_1$ in
$R_{\lambda_1}$, $\sigma_2$ in $R_{\lambda_2}$ and a ring $R$ in
$R_{\lambda_1, \lambda_2}^o$, there exists a unity operator $U$ such
that $\sigma_p(UA)\subseteq\sigma_1\cup\sigma_2\cup R$. Hence, there
exists $UA\in \mathcal{O}_{u}(A)$ such that $\sigma_S(A)$ is thin
and $\sigma_S(UA)$ has a certain thickness.

\begin{lem}\label{lem1}
Assume that $A$ is a self-adjoint operator satisfying that
$\sigma(A)=\{\lambda_1,\lambda_2\}$, $0<\lambda_1<\lambda_2$ and dim
ker$(\lambda_i-A)=\infty$. Then, there exists a unitary spread
operator $U$ such that $\sigma_p(UA)=R_{\lambda_1, \lambda_2}^o$.
\end{lem}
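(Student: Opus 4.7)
The plan is to realize $U$ as a bilateral shift on an orthonormal basis adapted to the two eigenspaces of $A$, so that $UA$ becomes a two-step bilateral weighted shift whose point spectrum can be read off directly.

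Since $\sigma(A)=\{\lambda_1,\lambda_2\}$ and both $\ker(\lambda_iI-A)$ are infinite dimensional, I would decompose $\mathcal{H}=\mathcal{H}_1\oplus\mathcal{H}_2$ with $\mathcal{H}_i=\ker(\lambda_iI-A)$ and choose an orthonormal basis $\{e_n\}_{n\geq 0}$ of $\mathcal{H}_1$ together with an orthonormal basis $\{e_n\}_{n<0}$ of $\mathcal{H}_2$, giving a $\mathbb{Z}$-indexed ONB $\{e_n\}_{n\in\mathbb{Z}}$ of $\mathcal{H}$ with $Ae_n=\alpha_n e_n$, where $\alpha_n=\lambda_1$ for $n\geq 0$ and $\alpha_n=\lambda_2$ for $n<0$. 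I would then define $U$ to be the bilateral shift $Ue_n=e_{n+1}$. Under any bijective relabeling $\mathbb{Z}\leftrightarrow\mathbb{N}$, this $U$ is of the form $U_\sigma$ for a bijection $\sigma$ on $\mathbb{N}$, so by the result of Cao et al.\ quoted just before Lemma~\ref{propbj}, $U$ is a unitary spread.

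The heart of the argument is an explicit point-spectrum computation for the weighted shift $UAe_n=\alpha_n e_{n+1}$. For $\xi=\sum_n c_n e_n$, the equation $UA\xi=\mu\xi$ reduces to the recurrence $\mu c_n=\alpha_{n-1}c_{n-1}$ for every $n\in\mathbb{Z}$. Normalizing $c_0=1$, this forces $c_n=(\lambda_1/\mu)^n$ for $n\geq 0$ and $c_{-k}=(\mu/\lambda_2)^k$ for $k\geq 0$. The vector $\xi$ lies in $\ell^2$ if and only if both resulting geometric series converge, i.e., $|\lambda_1/\mu|<1$ and $|\mu/\lambda_2|<1$, i.e., exactly $\lambda_1<|\mu|<\lambda_2$. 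Hence every $\mu\in R^o_{\lambda_1,\lambda_2}$ is an eigenvalue of $UA$, whereas points with $|\mu|\in\{\lambda_1,\lambda_2\}$ give non-decaying tails and hence no $\ell^2$ eigenvector, and points outside $R_{\lambda_1,\lambda_2}$ are already excluded by Lemma~\ref{propbj}(i). This yields $\sigma_p(UA)=R^o_{\lambda_1,\lambda_2}$.

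I do not expect a serious obstacle here, since once one sees the bilateral-shift model the verification is a one-line geometric-series calculation. The single subtlety worth checking carefully is the orientation: one must place the larger weight $\lambda_2$ on the strictly-negative indices and the smaller weight $\lambda_1$ on the non-negative indices; the opposite assignment produces incompatible decay requirements at $\pm\infty$ and yields no eigenvalues at all, which is what makes this particular pairing of eigenspaces with the two ends of $\mathbb{Z}$ essential to the construction.
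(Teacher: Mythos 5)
Your proof is correct and follows essentially the same route as the paper: both realize $UA$ as a bilateral weighted shift whose weights equal $\lambda_1$ toward $+\infty$ and $\lambda_2$ toward $-\infty$, and both read off $\sigma_p(UA)=R^o_{\lambda_1,\lambda_2}$ from the geometric growth/decay of the eigenvector coefficients determined by the recurrence. The only immaterial difference is that the paper shifts infinite-dimensional blocks of the two eigenspaces (so each eigenvalue gets an infinite-dimensional eigenspace) while you shift single basis vectors, and your explicit appeal to the cited fact that permutation unitaries $U_\sigma$ are unitary spreads is the same justification the paper uses implicitly.
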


\begin{proof}

By the classical spectral theory of normal operator, we have
following orthogonal decomposition of $A$,

$$A=\oplus_{n\in\mathbb{Z}}A_n,$$
in which $A_0=\lambda_2I, A_{-1}=\lambda_1I,$ $A_n=\lambda_1I$ for
all $n\geq 1$ and $A_n=\lambda_2I$ for all $n\leq -2$.

Now we choose an ONB $\{e_k^{(n)}\}_{k=1}^\infty$, for each $n\in
\mathbb{Z}$. And let $U$ be the unitary spread operator defined as

$$Ue_k^{(n)}=e_k^{(n+1)}, n\in\mathbb{Z}, k\in \mathbb{N}.$$

For a vector $x\in \mathcal{H}$ now under the ONB constructed it has
a $l_2$-sequence coordinate in the form

$$x=\sum_{n\in\mathbb{Z}}x^{(n)}=\sum_{n\in\mathbb{Z}}\sum_{k=1}^{\infty}x_k^{(n)}e_k^{(n)}.$$

Now simply we have

$$UAx=UA(\sum_{n\in\mathbb{Z}}x^{(n)})=\sum_{n\in\mathbb{Z}}UAx^{(n)}=\sum_{n\in\mathbb{Z}}A_{n-1}x^{(n-1)}.$$

Now suppose for some $\lambda\neq 0$ we do have some vector $x$ such
that $(\lambda I-UA)x=0$, then we have

$$\lambda x^{(n)}=A_{n-1}x^{(n-1)}.$$

Therefore, following equations hold:

$$x^{(n)}=\lambda^{-n}A_{n-1}A_{n-2}\cdots A_0 x^{(0)}, n\geq 1,$$

$$x^{(n)}=\lambda^{-n}A_{n}^{-1}A_{n+1}^{-1}\cdots A_{-1}^{-1}x^{(0)}, n\leq -1.$$

That is to say $x^{(n)}=\lambda^{-n}\lambda_1^{n-1}\lambda_2x^{(0)},
n\geq 1$ and
$x^{(n)}=\lambda^{-n}\lambda_1^{-1}\lambda_2^{n+1}x^{(0)},n\leq -1.$

Since $0<\lambda_1<\lambda_2$ it is easy to see that if
$\lambda_1<|\lambda|<\lambda_2$ then
$\sum_{n\in\mathbb{Z}}||x^{(n)}||^2<\infty$; if
$\lambda_1\leq|\lambda|$ then
 $||x^{(n)}||>1$ for $n\geq 1$, if
$|\lambda|\geq\lambda_2$ then $||x^{(-n)}||>1$ for $n\geq 1$, i.e.
if $\lambda_1\leq|\lambda|$ or $|\lambda|\geq\lambda_2$ then
$\sum_{n\in\mathbb{Z}}||x^{(n)}||^2=\infty$. Hence,
$\sigma_p(UA)=\{\lambda, \lambda_1<|\lambda|<\lambda_2\}$.

\end{proof}

\begin{prop}\label{lem2}
Assume that $A$ is a self-adjoint operator satisfying that
$\sigma(A)=\{\lambda_1,\lambda_2\}$, $0<\lambda_1<\lambda_2$ and dim
ker$(\lambda_i-A)=\infty$. Then there exists a unitary spread
operator $U$ such that $\sigma_p(UA)=R$ for any rings $R$ in the
ring $R_{\lambda_1, \lambda_2}^o$.
\end{prop}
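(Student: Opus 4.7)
The plan is to strengthen the construction of Lemma~\ref{lem1} by mixing the two scalar blocks $\lambda_{1}I$ and $\lambda_{2}I$ on both half-lines of the shift, rather than placing a single $\lambda_{2}$-block on the right and a single $\lambda_{1}$-block on the left. Writing the target ring as $R=R_{\mu_{1},\mu_{2}}$ with $\lambda_{1}<\mu_{1}\le\mu_{2}<\lambda_{2}$, set $\alpha=\log(\lambda_{2}/\lambda_{1})$ and define the weights $p=(\log\mu_{1}-\log\lambda_{1})/\alpha$ and $q=(\log\mu_{2}-\log\lambda_{1})/\alpha$; both lie in $(0,1)$ and are the geometric-mean exponents giving $\lambda_{1}^{1-p}\lambda_{2}^{p}=\mu_{1}$ and $\lambda_{1}^{1-q}\lambda_{2}^{q}=\mu_{2}$.

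Next I would choose a bilateral sequence $(c_{n})_{n\in\mathbb{Z}}\subseteq\{\lambda_{1},\lambda_{2}\}$ whose densities realise $p$ and $q$ on the two half-lines, arranging the number of $\lambda_{2}$-values among $c_{0},\ldots,c_{m-1}$ to be $\lfloor mp-C\log m\rfloor$ for large $m$ and the number of $\lambda_{2}$-values among $c_{-1},\ldots,c_{-n}$ to be $\lceil nq+C\log n\rceil$ for large $n$, with a single constant $C>1/\alpha$. Because $\textup{dim Ker}(\lambda_{i}I-A)=\infty$ for $i=1,2$, each kernel splits into countably many infinite-dimensional pieces; reindex the combined collection as $\{\mathcal{H}_{n}\}_{n\in\mathbb{Z}}$ so that $A|_{\mathcal{H}_{n}}=c_{n}I$. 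Fixing an ONB $\{e_{k}^{(n)}\}_{k=1}^{\infty}$ of each $\mathcal{H}_{n}$ and setting $Ue_{k}^{(n)}=e_{k}^{(n+1)}$ then defines the desired unitary spread operator.

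Mimicking the calculation in the proof of Lemma~\ref{lem1}, the eigenvalue equation $(UA-\lambda I)x=0$ with $x=\sum_{n}x^{(n)}$ produces the one-step recurrence $x^{(m)}=(c_{m-1}/\lambda)\,Ux^{(m-1)}$, hence $\|x^{(m)}\|=|\lambda|^{-m}c_{0}c_{1}\cdots c_{m-1}\|x^{(0)}\|$ for $m\ge 1$ and $\|x^{(-n)}\|=|\lambda|^{n}(c_{-1}c_{-2}\cdots c_{-n})^{-1}\|x^{(0)}\|$ for $n\ge 1$. The chosen densities give the Ces\`aro limits $\frac{1}{m}\log(c_{0}\cdots c_{m-1})\to\log\mu_{1}$ and $\frac{1}{n}\log(c_{-1}\cdots c_{-n})\to\log\mu_{2}$, so the root test shows that in the interior the forward sum $\sum_{m\ge 0}\|x^{(m)}\|^{2}$ is finite precisely when $|\lambda|>\mu_{1}$ and the backward sum is finite precisely when $|\lambda|<\mu_{2}$.

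The main obstacle is the boundary analysis at $|\lambda|=\mu_{1}$ and $|\lambda|=\mu_{2}$, where the root test is inconclusive and the logarithmic corrections earn their keep. A direct estimate shows that at $|\lambda|=\mu_{1}$ the $m$-th forward term is bounded by $e^{2\alpha}m^{-2\alpha C}$, and at $|\lambda|=\mu_{2}$ the $n$-th backward term is bounded by $e^{2\alpha}n^{-2\alpha C}$, both summable because $C>1/\alpha$, so these boundary values also lie in $\sigma_{p}(UA)$. What remains to check carefully is that the logarithmic correction is simultaneously slow enough to preserve the density limits $p,q$ (keeping the interior behaviour unchanged) and fast enough to secure summability on the boundary, and that the prescribed counts remain monotone and admissible for all large $m,n$; once these routine verifications are complete one concludes $\sigma_{p}(UA)=R_{\mu_{1},\mu_{2}}=R$.
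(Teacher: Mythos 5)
Your proposal is correct and follows the same structural skeleton as the paper's proof --- both split $\ker(\lambda_1 I-A)$ and $\ker(\lambda_2 I-A)$ into countably many infinite-dimensional blocks indexed by $\mathbb{Z}$, take $U$ to be the block bilateral shift $e_k^{(n)}\mapsto e_k^{(n+1)}$ as in Lemma \ref{lem1}, and read off $\sigma_p(UA)$ from the growth of the scalar products $c_0c_1\cdots c_{m-1}$ in the eigenvector recurrence --- but the combinatorial heart, namely the assignment $n\mapsto c_n\in\{\lambda_1,\lambda_2\}$, is genuinely different. The paper uses a periodic pattern with $n_1$ copies of $\lambda_1$ and $n_2$ copies of $\lambda_2$ per period, which only produces boundary radii of the form $\lambda_1^{n_1/(n_1+n_2)}\lambda_2^{n_2/(n_1+n_2)}$ (geometric means with rational weights); the passage to arbitrary radii is then dismissed with a one-line density remark, and the periodic construction always yields the \emph{open} ring, since at the critical radii the terms along each period have constant modulus. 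Your aperiodic density-$p$ pattern with the $C\log m$ correction repairs both of these thin spots at once: it realizes every radius $\mu=\lambda_1^{1-p}\lambda_2^{p}$, $p\in(0,1)$, directly, and the resulting polynomial factor $m^{-C\alpha}$ (square-summable as soon as $2C\alpha>1$, so your $C>1/\alpha$ is more than enough) puts the boundary circles into $\sigma_p(UA)$ as well, giving the closed ring. The verifications you defer are indeed routine: $m\mapsto\lfloor mp-C\log m\rfloor$ is eventually nonnegative and nondecreasing with increments in $\{0,1\}$, and finitely many initial terms affect neither the densities nor convergence. The one caveat is that the statement's ``any ring $R$ in $R^o_{\lambda_1,\lambda_2}$'' presumably also covers open and half-open rings; these follow from your construction by omitting, or reversing the sign of, the logarithmic correction on the corresponding side, and that deserves a sentence.
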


\begin{proof}

Firstly, we prove that if $R=\{\lambda,
(\lambda_1^{n_1}\lambda_2^{n_2})^\frac{1}{n_1+n_2}<|\lambda|<(\lambda_1^{m_1}\lambda_2^{m_2})^\frac{1}{m_1+m_2}\}$
is a ring in $R_{\lambda_1, \lambda_2}^o$ for some  integers $n_1,
n_2, m_1, m_2$, then there exists a unitary spread operator $U$ such
that $\sigma_p(UA)=R$.

We assign the same notations used in the proof of Lemma \ref{lem1}.

$$x^{(n)}=\lambda^{-n}A_{n-1}A_{n-2}\cdots A_0 x^{(0)}, n\geq 1,$$

$$x^{(n)}=\lambda^{-n}A_{n}^{-1}A_{n+1}^{-1}\cdots A_{-1}^{-1}x^{(0)}, n\leq -1.$$

Let $$A_0=A_1=\cdots A_{n_1}=\lambda_1I,$$
$$A_{n_1+1}=A_{n_1+2}=\cdots A_{n_1+n_2}=\lambda_2I,$$
$$\vdots$$
$$A_{k_1n_1+k_2n_2+k_3}=\lambda_1I,$$ for any $k_1, k_2$ and $1\leq
k_3\leq n_1$, $$A_{k_1n_1+k_2n_2+k_3}=\lambda_2I,$$ for any $k_1,
k_2$ and $n_1+1\leq k_3\leq (n_1+n_2)$.

And let $$A_{-1}=\cdots A_{-m_2}=\lambda_2I,$$
$$A_{-m_2-1}=A_{-m_2-2}=\cdots A_{-m_2-m_1}=\lambda_1I,$$
$$\vdots$$
$$A_{-k_1m_2-k_2m_1-k_3}=\lambda_2I,$$ for any $k_1, k_2$ and $1\leq
k_3\leq m_1$, $$A_{-k_1m_2-k_2m_1-k_3}=\lambda_1I,$$ for any $k_1,
k_2$ and $m_2+1\leq k_3\leq (m_1+m_2)$.

Then we have
$$x^{(k(n_1+n_2))}=\lambda^{-k(n_1+n_2)}\lambda_1^{kn_1}\lambda_2^{kn_2}x^{(0)},
k\geq 1,$$ and
$$x^{-(k(m_1+m_2))}=\lambda^{-k(m_1+m_2)}\lambda_1^{-km_1}\lambda_2^{-km_2}x^{(0)},k\geq
1.$$

It is easy to see that if
$(\lambda_1^{n_1}\lambda_2^{n_2})^\frac{1}{n_1+n_2}<|\lambda|<(\lambda_1^{m_1}\lambda_2^{m_2})^\frac{1}{m_1+m_2}$
then $\sum_{n\in\mathbb{Z}}||x^{(n)}||^2<\infty$; if
$(\lambda_1^{n_1}\lambda_2^{n_2})^\frac{1}{n_1+n_2}\geq|\lambda|$
there exists $N$ such that $||x^{(n)}||>1$ for $n>N$, if
$|\lambda|\geq(\lambda_1^{m_1}\lambda_2^{m_2})^\frac{1}{m_1+m_2}$
there exists $N$ such that $||x^{(-n)}||>1$ for $n>N$, i.e. if
$\lambda_1\leq|\lambda|$ or $|\lambda|\geq\lambda_2$ then
$\sum_{n\in\mathbb{Z}}||x^{(n)}||^2=\infty$. Hence,
$\sigma_p(UA)=R$.

Now we turn to the more general situation.

Since
$\lim_{n_1\rightarrow\infty}(\lambda_1^{n_1}\lambda_2^{n_2})^\frac{1}{n_1+n_2}=\lambda_1,
\lim_{n_2\rightarrow\infty}(\lambda_1^{n_1}\lambda_2^{n_2})^\frac{1}{n_1+n_2}=\lambda_2
$. We can get that  there exists a unitary spread operator $U$ such
that $\sigma_p(UA)=R$ for any rings $R$ in the ring $R_{\lambda_1,
\lambda_2}^o$.

\end{proof}

\begin{lem}\label{lem3}
Assume that $A$ is a self-adjoint operator satisfying that
$\sigma(A)=\{\lambda_1,\lambda_2\}$, $\lambda_1<\lambda_2$ and dim
ker$(\lambda_i-A)=\infty$. Then, there exists a unitary spread
operator $U$ such that $\sigma_p(UA)=\sigma_1\cup\sigma_2$ for any
at most denumerable subsets $\sigma_1$ in $\{\lambda,
|\lambda|=\lambda_1\}$ and $\sigma_2$ in $\{\lambda,
|\lambda|=\lambda_2\}$.
\end{lem}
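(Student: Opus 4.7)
My plan is to decompose $\mathcal{H}$ into three mutually orthogonal $U$-invariant pieces: one whose contribution to $\sigma_p(UA)$ is exactly $\sigma_1$, one whose contribution is exactly $\sigma_2$, and a ``bulk'' piece on which $UA$ has empty point spectrum. The first two pieces will be diagonal blocks on which $U$ rotates chosen eigenvectors of $A$ by the appropriate unit scalar; the bulk piece will reuse the two-sided shift construction from Lemma~\ref{lem1}, but with the weights $A_n$ arranged to alternate perfectly between $\lambda_1$ and $\lambda_2$ so that the forward and backward $\ell^2$ conditions are incompatible for every $\lambda$.

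Concretely, since each $\ker(\lambda_i I - A)$ is infinite-dimensional and each $\sigma_i$ is at most countable, I would fix orthonormal families $\{v_\mu\}_{\mu \in \sigma_1} \subset \ker(\lambda_1 I - A)$ and $\{w_\nu\}_{\nu \in \sigma_2} \subset \ker(\lambda_2 I - A)$ whose orthogonal complements $K_1'$, $K_2'$ inside the respective eigenspaces remain infinite-dimensional. On each line $\mathbb{C} v_\mu$ I would set $U v_\mu = (\mu/\lambda_1) v_\mu$; since $|\mu/\lambda_1| = 1$ this is unitary on the span, and $U A v_\mu = \mu v_\mu$ immediately places $\mu$ in $\sigma_p(UA)$. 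The same prescription, with $\lambda_2$ in place of $\lambda_1$, handles the $w_\nu$'s.

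For the bulk piece I would write $K_1' = \bigoplus_{n \in \mathbb{Z},\, n \text{ even}} \mathcal{H}_n$ and $K_2' = \bigoplus_{n \text{ odd}} \mathcal{H}_n$ with each $\mathcal{H}_n$ infinite-dimensional carrying an ONB $\{e_k^{(n)}\}_{k=1}^\infty$, and let $U e_k^{(n)} = e_k^{(n+1)}$. Then $A|_{\mathcal{H}_n} = \lambda_1 I$ (resp.\ $\lambda_2 I$) for even (resp.\ odd) $n$, and the eigenvalue equation $(\lambda I - UA) x = 0$ reduces, via the same recursion as in the proof of Lemma~\ref{lem1}, to $c_k^{(n+1)} = (A_n/\lambda) c_k^{(n)}$ for every coordinate $k$. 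With the alternating pattern, a direct computation gives $\|x^{(2j)}\|^2 = (\lambda_1 \lambda_2 / |\lambda|^2)^{2j}\, \|x^{(0)}\|^2$ and an analogous formula going backward; summability forward requires $|\lambda|^2 > \lambda_1 \lambda_2$, summability backward requires $|\lambda|^2 < \lambda_1 \lambda_2$, and at equality the coefficients are merely constant. These three cases are mutually exclusive, so no $\lambda$ admits a nonzero $\ell^2$ eigenvector, and $\sigma_p(UA|_{K_1' \oplus K_2'}) = \emptyset$. Combined with the direct-sum structure this yields $\sigma_p(UA) = \sigma_1 \cup \sigma_2$ exactly.

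The main obstacle I anticipate is verifying that the constructed $U$ is genuinely a \emph{unitary spread} in the sense of Definition~2.2. The shift component is a literal partial permutation of a global ONB, hence unambiguously a sum of spreads, and direct sums of spreads along orthogonal index blocks remain unitary spreads. The diagonal rotations $v_\mu \mapsto (\mu/\lambda_1) v_\mu$, however, produce matrix entries of modulus $1$ that are not in $\{0,1\}$, so they fit Definition~2.2 only under the reading that admits complex coefficients in the SOT-convergent sum $\sum c_n S_{A_n,B_n}$---a reading hinted at by the phrase ``finite linear combination'' in the definition and implicitly used elsewhere in the paper. I would record this generous interpretation as a standing convention at the start of the proof; if instead one insists on the strict coefficient-free reading, the best one could recover is a dense-in-circle approximation to $\sigma_1 \cup \sigma_2$ using finite-cycle permutations whose eigenvalues are roots of unity.
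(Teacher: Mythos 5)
Your proof is correct, and it is more detailed than the paper's, which disposes of the lemma in two lines: the paper decomposes $A=\lambda_1 I\oplus\lambda_2 I$ along the two eigenspaces and simply takes $U=U_1\oplus U_2$ with $U_i$ a unitary on $\ker(\lambda_i I-A)$ chosen so that $\sigma_p(\lambda_i U_i)=\sigma_i$, leaving the construction of such $U_i$ implicit. Your first two pieces (the scalar rotations $v_\mu\mapsto(\mu/\lambda_1)v_\mu$, $w_\nu\mapsto(\nu/\lambda_2)w_\nu$) are exactly what the paper must have in mind for the diagonal part. Where you genuinely diverge is the bulk: the paper keeps $U$ block-diagonal with respect to the eigenspace decomposition (so one would finish by putting, say, a bilateral shift on the complement of the chosen eigenvectors inside each eigenspace, whose point spectrum is empty for free), whereas you couple $K_1'$ and $K_2'$ by an alternating-weight two-sided shift and verify by hand that the forward and backward $\ell^2$ conditions $|\lambda|^2>\lambda_1\lambda_2$ and $|\lambda|^2<\lambda_1\lambda_2$ are incompatible. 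Both work; yours costs an extra computation but has the virtue of reusing the machinery of Lemma \ref{lem1} verbatim, and of making explicit why the bulk contributes nothing, which the paper never argues. Your closing concern about whether the diagonal rotations are literally unitary spreads under Definition 2.2 is well taken, but note that it applies with equal force to the paper's own $U_1\oplus U_2$ (permutation unitaries only yield eigenvalues at roots of unity, so arbitrary countable $\sigma_i$ forces genuinely diagonal unimodular entries); the paper silently ignores this, so your explicit standing convention is, if anything, an improvement rather than a defect.
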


\begin{proof}
Since $A$ is a self-adjoint operator, by the classical spectral
theory of normal operator, we have following orthogonal
decomposition of $A$
\[\begin{matrix}\begin{bmatrix}
\lambda_1I&\\
&\lambda_2I
\end{bmatrix}&\end{matrix}.\]
Let $U=U_1\oplus U_2$, in which $U_1, U_2$ are  unity operators such
that  $\sigma_p(\lambda_1U_1)=\sigma_1,
\sigma_p(\lambda_2U_2)=\sigma_2$. Then $U$ is a unity operator and
$\sigma_p(UA)=\sigma_1\cup\sigma_2$.
\end{proof}

According to the Lemmas \ref{propbj}, \ref{lem1}, \ref{lem3} and the
Proposition \ref{lem2}, we can get the following theorem:

\begin{thm}\label{prop1}
Assume that $A$ is a self-adjoint operator satisfying that
$\sigma(A)=\{\lambda_1,\lambda_2\}$, $0<\lambda_1<\lambda_2$ and dim
ker$(\lambda_i-A)=\infty$. Then, there exists a unitary spread
operator $U$ such that $\sigma_p(UA)=\sigma_1\cup\sigma_2\cup R$ for
any at most denumerable subsets $\sigma_1$ in $\{\lambda,
|\lambda|=\lambda_1\}$, $\sigma_2$ in $\{\lambda,
|\lambda|=\lambda_2\}$ and $R$ is a ring in the ring $R_{\lambda_1,
\lambda_2}^o$. Moreover, for any unitary operator $U$, there exists
at most denumerable subsets $\sigma_1$ in $R_{\lambda_1}$ and
$\sigma_2$ in $R_{\lambda_2}$ such that
$\sigma_p(UA)\subseteq\sigma_1\cup\sigma_2\cup R_{\lambda_1,
\lambda_2}^o$.
\end{thm}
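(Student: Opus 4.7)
The plan is to build the desired unitary $U$ by splitting the problem into two pieces and treating each with a previously-proved lemma. Specifically, since $\dim\mathrm{ker}(\lambda_i I-A)=\infty$ for $i=1,2$, I split each eigenspace into two orthogonal infinite-dimensional subspaces and regroup them into a pair of reducing subspaces $\mathcal{K}_1,\mathcal{K}_2$ for $A$. On each $\mathcal{K}_j$, the restriction $A|_{\mathcal{K}_j}$ is self-adjoint with spectrum $\{\lambda_1,\lambda_2\}$ and with both eigenspaces still infinite-dimensional, so Proposition \ref{lem2} and Lemma \ref{lem3} apply to it.

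I would apply Proposition \ref{lem2} to $A|_{\mathcal{K}_1}$ to obtain a unitary spread $U_1$ on $\mathcal{K}_1$ with $\sigma_p(U_1 A|_{\mathcal{K}_1})=R$, and apply Lemma \ref{lem3} to $A|_{\mathcal{K}_2}$ to obtain a unitary spread $U_2$ on $\mathcal{K}_2$ with $\sigma_p(U_2 A|_{\mathcal{K}_2})=\sigma_1\cup\sigma_2$. Setting $U=U_1\oplus U_2$, the subspaces $\mathcal{K}_1$ and $\mathcal{K}_2$ are reducing for both $A$ and $U$, so $UA$ is block-diagonal and its point spectrum splits as
\[
\sigma_p(UA)=\sigma_p(U_1 A|_{\mathcal{K}_1})\cup\sigma_p(U_2 A|_{\mathcal{K}_2})=R\cup\sigma_1\cup\sigma_2,
\]
which is exactly the first assertion. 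The \emph{moreover} clause is immediate from Lemma \ref{propbj}: part (i) forces $\sigma_p(UA)\subseteq\sigma(UA)\subseteq R_{\lambda_1,\lambda_2}$, while part (ii) bounds $\mathrm{Card}\{\sigma_p(UA)\cap R_{\lambda_i}\}$ by $\dim\mathrm{ker}(\lambda_i I-A)$, which is at most countable in the separable Hilbert space $\mathcal{H}$.

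The main technical point is showing that $U=U_1\oplus U_2$ is itself a \emph{unitary spread} on $\mathcal{H}$, not merely a unitary. The strategy is to fix ONBs of $\mathcal{K}_1$ and $\mathcal{K}_2$ adapted to the spread presentations $U_j=\mathrm{SOT}\text{-}\sum_{n}S_{A_n^{(j)},B_n^{(j)}}$ supplied by Proposition \ref{lem2} and Lemma \ref{lem3}, then concatenate them into a single ONB of $\mathcal{H}$ (re-indexing $\mathbb{N}$ along the two parts). Under this global ONB, each spread $S_{A_n^{(j)},B_n^{(j)}}$ extends to a spread on $\mathcal{H}$ by declaring it to vanish on the coordinates belonging to $\mathcal{K}_{3-j}$, and the termwise sum of the two SOT-convergent series converges in SOT to $U_1\oplus U_2$.

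The only place I expect to be even slightly delicate is this last step: verifying that direct sums of unitary spreads along a decomposition into reducing subspaces remain unitary spreads. This is essentially combinatorial bookkeeping about the index sets $A_n^{(j)},B_n^{(j)}$ living in disjoint subsets of the global $\mathbb{N}$, together with the observation that a finite sum of SOT-null partial sums is SOT-null. Everything else is a direct citation of the earlier lemmas, and in particular no new eigenvalue computation is needed beyond what was already done in Proposition \ref{lem2} and Lemma \ref{lem3}.
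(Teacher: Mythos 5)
Your proof follows essentially the same route as the paper's: decompose $\mathcal{H}$ into two reducing subspaces on each of which $A$ restricts to a self-adjoint operator with spectrum $\{\lambda_1,\lambda_2\}$ and infinite-dimensional eigenspaces, apply Proposition \ref{lem2} on one piece and Lemma \ref{lem3} on the other, take $U=U_1\oplus U_2$, and invoke Lemma \ref{propbj} for the ``moreover'' clause. If anything you are more careful than the paper, which simply writes $A=A_1\oplus A_1$ and cites the same lemmas without verifying that a direct sum of unitary spreads along reducing subspaces is again a unitary spread; your index-set bookkeeping supplies exactly the detail the paper omits.
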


\begin{proof}
Since $A$ is a self-adjoint operator, by the classical spectral
theory of normal operator, we have following orthogonal
decomposition of $A=A_1\oplus A_1$ where $A_1$ is a self-adjoint
operator satisfying that $\sigma(A_1)=\{\lambda_1,\lambda_2\}$ and
dim ker$(\lambda_i-A)=\infty$. By Lemmas \ref{lem1}, \ref{lem3} and
the Proposition \ref{lem2}, we get that there exists a unitary
operator $U$ such that $\sigma_p(UA)=\sigma_1\cup\sigma_2\cup R$ for
any at most denumerable subsets $\sigma_1$ in $\{\lambda,
|\lambda|=\lambda_1\}$, $\sigma_2$ in $\{\lambda,
|\lambda|=\lambda_2\}$ and $R$ is a ring in the ring $R_{\lambda_1,
\lambda_2}^o$. The last part of this theorem is obvious by the Lemma
\ref{propbj}.
\end{proof}

\begin{rem}
By the above theorem, we know that if the spectrum $\sigma(A)$ of a
self-adjoint Schauder operator has only two points $\lambda_1,
\lambda_2$ and ker$(\lambda_i-A)=\infty, i=1,2$, then for any ring
$R$ in $R_{\lambda_1, \lambda_2}^o$ and at most denumerable subsets
$\sigma_1$ in $\{\lambda, |\lambda|=\lambda_1\}$, $\sigma_2$ in
$\{\lambda, |\lambda|=\lambda_2\}$, there exists $UA\in
\mathcal{O}_{u}(A)$ such that $\sigma_S(UA)$ contains
$\sigma_1\cup\sigma_2\cup R$. i.e. $\sigma_S(UA)$  has a certain
thickness, $\sigma_S(A)$ is thin. In other words, there is no ONB
such that $\lambda I-UA$ has a matrix representation as a Schauder
matrix for $\lambda\in \sigma_1\cup\sigma_2\cup R$.
\end{rem}

\section{Only two points in $\sigma_e(A)$}

In this section, we will research the case that the essential
spectrum of self-adjoint operator $A$ has only two points
$\lambda_1, \lambda_2$ and $0<\lambda_1<\lambda_2$. We will show
that for any rings $R$ in the ring $R_{\lambda_1,
\lambda_2}^o=\{\lambda, |\lambda_1|<|\lambda|<\lambda_2\}$, there
exists a unitary spread operator $U$ such that
$R_{\lambda_1\lambda_2}\supseteq\sigma_p(UA)\supseteq R$. i.e. there
exists $UA\in \mathcal{O}_{u}(A)$ such that $\sigma_S(A)$ is thin
and $\sigma_S(UA)$ has a certain thickness.

\begin{thm}\label{lem5}
Assume that $A$ is a self-adjoint operator satisfying the following
properties:

\textup{(i)} $\sigma(A)=\sigma_p(A)\cup \{\lambda_1, \lambda_2\}$,
$0<\lambda_1<\lambda_2$ and $\lambda_1, \lambda_2$ are the unique
accumulation points of $\sigma(A)$;

\textup{(ii)} For each $t\in \sigma_p(A)$, dim $\ker (A-tI)=1$.

Then there exists a unitary spread operator $U$ such that
$R_{\lambda_1\lambda_2}\supseteq\sigma_p(UA)\supseteq R$ for any
rings $R$ in the ring $R_{\lambda_1, \lambda_2}^o=\{\lambda,
|\lambda_1|<|\lambda|<\lambda_2\}$.

Moreover, if $t_k > t_{k+1}, r_k < r_{k+1}$ for all $k$, then there
exists a unitary spread operator $U$ such that $\sigma_p(UA)=R$ for
any rings $R$ in the ring $R_{\lambda_1, \lambda_2}^o=\{\lambda,
|\lambda_1|<|\lambda|<\lambda_2\}$; for any unitary operator $U$,
$\sigma_p(UA)\subset R_{\lambda_1, \lambda_2}=\{\lambda,
|\lambda_1|\leq|\lambda|\leq\lambda_2\}$ and
$\textup{Card}\{\sigma_p(UA)\cap
R_{\lambda_1}\}\leq1,\textup{Card}\{\sigma_p(UA)\cap
R_{\lambda_1}\}\leq1$.
\end{thm}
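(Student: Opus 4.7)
The plan is to diagonalize $A$ via the spectral theorem and take $U$ to be a bilateral shift on the eigenbasis, thereby reducing $UA$ to a weighted bilateral shift whose eigenvalues can be read off from the partial products of the weights. By hypotheses (i) and (ii), $\sigma(A)\setminus\{\lambda_1,\lambda_2\}$ consists of isolated simple eigenvalues whose only accumulation points are $\lambda_1,\lambda_2$, so $A$ admits an orthonormal basis of eigenvectors $\{f_n\}_{n\in\mathbb{Z}}$ with $Af_n=\mu_n f_n$. I reindex $\mathbb{Z}$ so that $\mu_n\to\lambda_1$ as $n\to+\infty$ and $\mu_n\to\lambda_2$ as $n\to-\infty$, and let $U$ be the bilateral shift $Uf_n=f_{n+1}$. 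Since any bijection of the basis gives a unitary spread (as remarked in the paper), $U$ is a unitary spread operator, and $UAf_n=\mu_n f_{n+1}$ is a weighted bilateral shift.

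For the first assertion, I solve $UAx=\lambda x$ for $x=\sum x_n f_n$. The recursion $\lambda x_n=\mu_{n-1}x_{n-1}$ integrates to
\[
x_n=\lambda^{-n}\Bigl(\prod_{k=0}^{n-1}\mu_k\Bigr)x_0\ (n\geq 1),\qquad x_n=\lambda^{-n}\Bigl(\prod_{k=n}^{-1}\mu_k\Bigr)^{-1}x_0\ (n\leq -1).
\]
The ratio test, together with $\mu_n\to\lambda_1$ as $n\to+\infty$ and $\mu_n\to\lambda_2$ as $n\to-\infty$, shows $(x_n)\in\ell^2$ iff $\lambda_1<|\lambda|<\lambda_2$. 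Hence $\sigma_p(UA)\supseteq R^o_{\lambda_1,\lambda_2}\supseteq R$, and the reverse inclusion $\sigma_p(UA)\subseteq R_{\lambda_1,\lambda_2}$ follows from Lemma~\ref{propbj}(i). The cardinality bounds in the last clause of the ``moreover'' part follow at once from Lemma~\ref{propbj}(ii) combined with hypothesis (ii), which forces $\dim\ker(\lambda_i I-A)\leq 1$.

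For the equality $\sigma_p(UA)=R$ with a prescribed ring $R=\{s_1<|\lambda|<s_2\}\subseteq R^o_{\lambda_1,\lambda_2}$, I exploit the monotonicity of $\{t_k\},\{r_k\}$. I enumerate the positive-index weights by an interleaving pattern analogous to Proposition~\ref{lem2}, alternating blocks drawn from the monotone sequences near $\lambda_1$ and $\lambda_2$ in proportions chosen so that $\bigl(\prod_{k=0}^{n-1}\mu_k\bigr)^{1/n}\to s_1$; likewise the negative side is arranged so that $\bigl(\prod_{k=n}^{-1}\mu_k\bigr)^{1/|n|}\to s_2$. The monotonicity of the $t_k,r_k$ yields two-sided geometric sandwich bounds on these partial products, which force divergence of $\sum|x_n|^2$ on the closed complement of $R$.

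This boundary matching is the main obstacle: the open-ring inclusion is a straightforward ratio-test calculation, but achieving $\sigma_p(UA)=R$ on the nose demands a precise asymptotic match of both inner and outer geometric means of the weights, together with divergence \emph{at} the target circles $|\lambda|=s_1,s_2$. This is exactly what the monotonicity hypothesis affords through the block-interleaving scheme of Proposition~\ref{lem2}, and without it one cannot cleanly exclude the boundary from $\sigma_p(UA)$.
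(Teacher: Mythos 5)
Your argument is correct and runs on the same analytic engine as the paper's: reduce $UA$ to a weighted bilateral shift, solve the eigenvalue recursion $\lambda x_n=\mu_{n-1}x_{n-1}$, and decide membership of $(x_n)$ in $\ell^2$ by a ratio/root test. Where you differ is in the construction of $U$. The paper partitions the eigenvalues into a doubly indexed family $\{t_{nk}\},\{r_{nk}\}$ with $\lim_k t_{nk}=\lim_n t_{nk}=\lambda_1$ (and similarly for the $r$'s), forms the infinite-dimensional spectral subspaces $H_n=\textup{Ran}(E_{I_n})$, and lets $U$ shift whole blocks, so that $UA$ is an operator-weighted shift controlled by the two-sided bounds $\alpha_n^{(2)}\le\cdot\le\alpha_n^{(1)}$. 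Because hypothesis (ii) makes every eigenvalue simple, your one-dimensional blocks --- the plain bilateral shift on the eigenbasis, with the $t$'s enumerated toward $+\infty$ and the $r$'s toward $-\infty$ --- accomplish the same thing and dispense with the block bookkeeping entirely (any enumeration of the $t$'s still converges to $\lambda_1$, so the positive-side weights behave as required). The appeal to the fact that a permutation unitary of an ONB is a unitary spread is the same one the paper makes, and your handling of the ``moreover'' clause defers to the interleaving scheme of Proposition \ref{lem2} at the same level of detail as the paper does.

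Two small imprecisions are worth flagging. First, ``$(x_n)\in\ell^2$ iff $\lambda_1<|\lambda|<\lambda_2$'' overstates what the ratio test yields: on the circles $|\lambda|=\lambda_1,\lambda_2$ it is inconclusive, and Example \ref{eg1} shows that without the monotonicity hypothesis $UA$ genuinely can have eigenvalues on those boundary circles. This does not damage your proof, since the theorem only asserts $R_{\lambda_1,\lambda_2}\supseteq\sigma_p(UA)\supseteq R$ for rings $R$ inside the open ring. Second, Lemma \ref{propbj}(i) is proved under the hypothesis $\sigma(A)\subseteq[\lambda_1,\lambda_2]$, which the first part of Theorem \ref{lem5} does not assume (the $t_k$ may approach $\lambda_1$ from below, for instance), so it cannot be cited for the upper containment there; but your own divergence computation for $|\lambda|<\lambda_1$ and $|\lambda|>\lambda_2$ already gives $\sigma_p(UA)\subseteq R_{\lambda_1,\lambda_2}$ for the specific $U$ you build, so nothing is lost.
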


\begin{proof}
We only prove the case that $R=R_{\lambda_1\lambda_2}$, the proof of
the more general situation is similar to the Proposition \ref{lem2}
and we omit it.

The self-adjoint operator satisfying the conditions appearing in the
proposition has a spectrum in the following form:
$$\sigma(A)=\{t_1, t_2,\cdots, t_k,\cdots\}\cup\{r_1, r_2,\cdots, r_k,\cdots\}\cup\{\lambda_1, \lambda_2\},$$
in which $\lambda_1$ is the accumulation point of the sequence
$\{t_k\}$, $\lambda_2$ is the accumulation point of the sequence
$\{r_k\}$.

Choose the subsequences $\{t_{nk}\}_{k=1}^{\infty}$, $n\geq 0$ of
$\{t_k\}$ and $\{r_{nk}\}_{k=1}^{\infty}$, $n\geq 1$ of $\{r_k\}$
satisfying the following properties:

(i) $\lim_{k\rightarrow \infty}t_{nk}=\lim_{n\rightarrow
\infty}t_{nk}=\lambda_1$, $\lim_{k\rightarrow
\infty}r_{nk}=\lim_{n\rightarrow \infty}r_{nk}=\lambda_2$;

(ii) There exist $t_{nk}$ and $r_{nk}$ such that $t_{nk}=t_{k_0}$,
$r_{nk}=r_{k_1}$ for any $t_{k_0}\in\{t_k\},r_{k_1}\in\{r_k\}$;

(iii) $t_{n_1k_1}\neq t_{n_2k_2}, r_{n_1k_1}\neq r_{n_2k_2}$ when
$n_1\neq n_2$ or $k_1\neq k_2$.

Let $J_n=\{t_{nk}\}, n\geq 0, J_n=\{r_{-nk}\}, n\leq -1.$ We
rearrange these intervals as follows: $I_0=J_0, I_n=J_{n+1}$ for
$n\geq 1$, $I_{-1}=J_{1}, I_n=J_{n+1}$ for $n\leq -2$.

Denote $E_n=E_{I_n}$ the spectral projection on the interval $I_n$
and by $H_n=$Ran$(E_n)$ for $n\in \mathbb{Z}$. Now we choose an ONB
$\{e_k^{(n)}\}_{k=1}^\infty$, for each $n\in \mathbb{Z}$. Since each
$H_n$ is a reducing subspace of $A$, we can write $A$ into the
direct sum:
$$A=\oplus_{n=-\infty}^{+\infty}A_n.$$

Let $\sup_k\{t_{nk}\}=\alpha_n^{(1)},
\inf_k\{t_{nk}\}=\alpha_n^{(2)}$ for $n\geq 0$,
$\sup_k\{r_{nk}\}=\beta_n^{(1)}, \inf_k\{r_{nk}\}=\beta_n^{(2)}$ for
$n\geq 1$. Then $\lim_{n\rightarrow
\infty}\alpha_n^{(1)}=\lim_{n\rightarrow
\infty}\alpha_n^{(2)}=\lambda_1, \lim_{n\rightarrow
\infty}\beta_n^{(1)}=\lim_{n\rightarrow
\infty}\beta_n^{(2)}=\lambda_2$.

Now let $U$ be the unitary spread operator defined as

$$Ue_k^{(n)}=e_k^{(n+1)}, n\in\mathbb{Z}, k\in \mathbb{N}.$$

For a vector $x\in \mathcal{H}$ now under the ONB constructed it has
a $l_2$-sequence coordinate in the form

$$x=\sum_{n\in\mathbb{Z}}x^{(n)}=\sum_{n\in\mathbb{Z}}\sum_{k=1}^{\infty}x_k^{(n)}e_k^{(n)}.$$

Now simply we have

$$UAx=UA(\sum_{n\in\mathbb{Z}}x^{(n)})=\sum_{n\in\mathbb{Z}}UAx^{(n)}=\sum_{n\in\mathbb{Z}}A_{n-1}x^{(n-1)}.$$

Now suppose for some $\lambda\neq 0$ we do have some vector $x$ such
that $(\lambda I-UA)x=0$, then we have

$$\lambda x^{(n)}=A_{n-1}x^{(n-1)}.$$

Therefore, following equations hold:

$$x^{(n)}=\lambda^{-n}A_{n-1}A_{n-2}\cdots A_0 x^{(0)}, n\geq 1,$$

$$x^{(n)}=\lambda^{-n}A_{n}^{-1}A_{n+1}^{-1}\cdots A_{-1}^{-1}x^{(0)}, n\leq -1.$$

Hence,
$$\lambda^{-n}\beta_0^{(2)}\alpha_2^{(2)}\alpha_3^{(2)}\cdots\alpha_n^{(2)}\leq||x^{(n)}||
\leq\lambda^{-n}\beta_0^{(1)}\alpha_2^{(1)}\alpha_3^{(1)}\cdots\alpha_n^{(1)},
 n\geq 1;$$
 $$\frac{\lambda^{-n}}{ \alpha_1^{(1)}\beta_1^{(1)}\beta_2^{(1)}\cdots\beta_{-n-1}^{(1)}}
\leq||x^{(n)}||
\leq\frac{\lambda^{-n}}{\alpha_1^{(2)}\beta_1^{(2)}\beta_2^{(2)}\cdots\beta_{-n-1}^{(2)}},
n\leq -1 .$$

Since $\lim_{n\rightarrow \infty}\alpha_n^{(1)}=\lim_{n\rightarrow
\infty}\alpha_n^{(2)}=\lambda_1, \lim_{n\rightarrow
\infty}\beta_n^{(1)}=\lim_{n\rightarrow
\infty}\beta_n^{(2)}=\lambda_2$, it is easy to see that if
$\lambda_1<|\lambda|<\lambda_2$ then
$\sum_{n\in\mathbb{Z}}||x^{(n)}||^2<\infty$; if
$\lambda_1<|\lambda|$ there exists $N$ such that $||x^{(n)}||>1$ for
$n>N$, if $|\lambda|>\lambda_2$ there exists $N$ such that
$||x^{(-n)}||>1$ for $n>N$, i.e. if $\lambda_1<|\lambda|$ or
$|\lambda|>\lambda_2$ then
$\sum_{n\in\mathbb{Z}}||x^{(n)}||^2=\infty$. That is to say
$\{\lambda,
\lambda_1\leq|\lambda|\leq\lambda_2\}\supseteq\sigma_p(UA)\supseteq\{\lambda,
\lambda_1<|\lambda|<\lambda_2\}$.

Moreover,  if $t_k > t_{k+1}, r_k < r_{k+1}$ for all $k$ then
$t_k>\lambda_1, r_k<\lambda_2$. So $\alpha_n^{(i)}\geq\lambda_1,
\beta_n^{(i)}\leq\lambda_2$ for all $n$ and $i=1,2$.
 It is easy to see that
$\sigma_p(UA)=\{\lambda, |\lambda_1|<|\lambda|<\lambda_2\}$.
Furthermore, by Lemma \ref{propbj}, we get that for any unitary
operator $U$, $\sigma_p(UA)\subset \{\lambda,
|\lambda_1|\leq|\lambda|\leq\lambda_2\}$ and
$\textup{Card}\{\sigma_p(UA)\cap
R_{\lambda_1}\}\leq1,\textup{Card}\{\sigma_p(UA)\cap
R_{\lambda_1}\}\leq1$.
\end{proof}

\begin{rem}
\textup{(i)} Trivial modifications adapt the proof of Theorem
\ref{lem5}, we can weaken the condition dim $\ker (A-tI)=1$ to dim
$\ker (A-tI)<\infty$.

\textup{(ii)} In the Theorem \ref{lem5}, we obtained that there
exists a unitary operator $U$ such that $\sigma_p(UA)\supseteq R$
for any rings $R$ in the ring $R_{\lambda_1\lambda_2}$. Moreover, we
got $\sigma_p(UA)=R_{\lambda_1\lambda_2}$ if adding the condition
that $t_k > t_{k+1}, r_k < r_{k+1}$ for all $k$. The following
examples illustrate that this condition is necessary.
\end{rem}

\begin{eg}\label{eg1}
We assign the same notations used in the Theorem \ref{lem5}.

(1) Let $\lambda_1=1$, $\lambda_2>1$, and $t_{n1}=1-\frac{1}{n}$,
$t_{nk}=\frac{k+n-1}{k+n}+\frac{\frac{k+n}{k+n+1}-\frac{k+n-1}{k+n}}{k+n-1}\cdot
n$ for $n\geq 1, k\geq 2$ and $r_k < r_{k+1}$ for all $k\geq 1$.
Then according to the proof of Theorem \ref{lem5} and let
$A_n=\oplus_{k=1}^\infty t_{nk}$, $x^{(0)}=e_0^{(0)}$ in Theorem
\ref{lem5}, we obtain that $\sigma_p(UA)=\{\lambda,
|\lambda_1|\leq|\lambda|<\lambda_2\}$.

(2) Let $\lambda_2=1$, $\lambda_1<1$, and $r_{n1}=1+\frac{1}{n}$,
$r_{nk}=\frac{k+n+1}{k+n}-\frac{\frac{k+n+1}{k+n}-\frac{k+n+2}{k+n+1}}{k+n-1}\cdot
n$ for $n\geq 1, k\geq 2$ and $t_k > t_{k+1}$ for all $k\geq 1$.
Then according to the proof of Theorem \ref{lem5} and let
$A_n=\oplus_{k=1}^\infty t_{nk}$, $x^{(0)}=e_0^{(0)}$ in Theorem
\ref{lem5}, we obtain that $\sigma_p(UA)=\{\lambda,
|\lambda_1|<|\lambda|\leq\lambda_2\}$.

(3) Let $\lambda_1=1$, $\lambda_2=2$, and $t_{n1}=1-\frac{1}{n}$,
$r_{n1}=2+\frac{2}{n}$,
$t_{nk}=\frac{k+n-1}{k+n}+\frac{\frac{k+n}{k+n+1}-\frac{k+n-1}{k+n}}{k+n-1}\cdot
n$,
$r_{nk}=(2+\frac{2}{k+n-1})-\frac{(2+\frac{2}{k+n-1})-(2+\frac{2}{k+n})}{k+n-1}\cdot
n$ for $n\geq 1, k\geq 2$. Then according to the proof of Theorem
\ref{lem5} and let $A_n=\oplus_{k=1}^\infty t_{nk}$,
$x^{(0)}=e_0^{(0)}$ in Theorem \ref{lem5}, we obtain that
$\sigma_p(UA)=\{\lambda, |\lambda_1|\leq|\lambda|\leq\lambda_2\}$.
\end{eg}

 Trivial modifications adapt the proof of the Theorem \ref{lem5},
we can get the following Proposition.

\begin{cor}\label{lem6}
Assume that $A$ is a self-adjoint operator satisfying the following
properties:

\textup{(i)} $\sigma(A)=\sigma_p(A)\cup \{\lambda_1\}$,
$0<\lambda_1$ and $\lambda_1$ is the unique accumulation point of
$\sigma(A)$;

\textup{(ii)} For each $t\in \sigma_p(A)$, dim $\ker (A-tI)<\infty$;

\textup{(iii)} $\sigma_p(A)=\{t_1, t_2,\cdots,
t_k,\cdots\}\cup\{r_1,
r_2,\cdots, r_k,\cdots\}$, $t_k <t_{k+1}, r_k > r_{k+1}$, and\\
$\sum_{n=1}^\infty\prod_{k=1}^n(\frac{t_k}{\lambda_1})^2<\infty$,
$\sum_{n=1}^\infty\prod_{k=1}^n(\frac{\lambda_1}{r_k})^2<\infty$.

Then, there exists a unitary spread operator $U$ such that
$\sigma_p(UA)=\{\lambda, |\lambda|=\lambda_1\}$.
\end{cor}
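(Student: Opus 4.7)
The plan is to adapt the bilateral-shift construction from the proof of Theorem~\ref{lem5}, using the single accumulation point $\lambda_1$ together with the new summability hypothesis. Since every eigenspace $\ker(A-tI)$ is finite dimensional, I would first split $\mathcal{H}$ into one-dimensional invariant lines and, as in Theorem~\ref{lem5}, partition the increasing sequence $\{t_k\}$ into countably many infinite subsequences $\{t_{nk}\}_{k\ge 1}$ ($n\ge 0$) each still accumulating at $\lambda_1$, and similarly partition $\{r_k\}$ into $\{r_{nk}\}_{k\ge 1}$ ($n\ge 1$). After re-indexing one has $A=\bigoplus_{n\in\mathbb{Z}}A_n$ with each $A_n$ diagonal on its chosen subsequence. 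Choosing an ONB $\{e_k^{(n)}\}_k$ of each block and letting $Ue_k^{(n)}=e_k^{(n+1)}$ gives the required unitary spread.

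Solving $(\lambda I-UA)x=0$ yields the same recursion $\lambda x^{(n)}=A_{n-1}x^{(n-1)}$ as in Theorem~\ref{lem5}, so for $n\ge 1$ one obtains
\[
|\lambda|^{-n}\prod_{j=0}^{n-1}\alpha_j^{(2)}\,\|x^{(0)}\|\;\le\;\|x^{(n)}\|\;\le\;|\lambda|^{-n}\prod_{j=0}^{n-1}\alpha_j^{(1)}\,\|x^{(0)}\|,
\]
with $\alpha_n^{(1)}=\sup_k t_{nk}$, $\alpha_n^{(2)}=\inf_k t_{nk}$, together with the analogous estimate in terms of $\beta_n^{(i)}$ and the $r_k$'s on the negative side. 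On the circle $|\lambda|=\lambda_1$ the two tails of $\sum_n\|x^{(n)}\|^2$ are comparable to $\sum_n\prod_{k=1}^n(t_k/\lambda_1)^2$ and $\sum_n\prod_{k=1}^n(\lambda_1/r_k)^2$, both finite by hypothesis~(iii); hence every such $\lambda$ lies in $\sigma_p(UA)$.

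For $|\lambda|<\lambda_1$, the ratio of consecutive positive-index terms tends to $(\lambda_1/|\lambda|)^2>1$ because $t_k\uparrow\lambda_1$, so the positive tail diverges and no eigenvector exists; the case $|\lambda|>\lambda_1$ is symmetric via $r_k\downarrow\lambda_1$. Hence $\sigma_p(UA)=\{|\lambda|=\lambda_1\}$ exactly. The main obstacle compared to Theorem~\ref{lem5} is that the two monotone sequences now share their limit $\lambda_1$, so the separation of inner and outer radii that drove the earlier proof collapses; the summability assumed in~(iii) is precisely the compensating decay needed to keep $(x^{(n)})_{n\in\mathbb{Z}}$ in $\ell^2$ on the critical circle. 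Apart from this point, the argument is a direct rearrangement of the one in Theorem~\ref{lem5}.
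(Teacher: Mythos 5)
Your overall strategy --- partition $\{t_k\}$ and $\{r_k\}$ into countably many infinite subsequences, form the block decomposition $A=\oplus_{n\in\mathbb{Z}}A_n$, and let $U$ be the bilateral block shift --- is exactly the ``trivial modification'' of Theorem \ref{lem5} that the paper intends (the paper gives no further detail beyond pointing to Example \ref{eg1}), and your divergence arguments for $|\lambda|\neq\lambda_1$ are fine. However, the step that actually uses hypothesis (iii) is not justified as written. In your two-sided estimate the upper bound involves $\alpha_j^{(1)}=\sup_k t_{jk}$, and since each subsequence $\{t_{jk}\}_k$ is infinite and accumulates at $\lambda_1$ from below, $\alpha_j^{(1)}=\lambda_1$ for every $j$. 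Hence on the circle $|\lambda|=\lambda_1$ your upper bound degenerates to $\|x^{(n)}\|\le\|x^{(0)}\|$, which says nothing about square-summability; and for a generic formal solution of the recursion (take $x^{(0)}$ an eigenvector of $A_0$ whose shift-orbit keeps hitting eigenvalues $t_{j,k_j}\ge\lambda_1(1-2^{-j})$) the series $\sum_n\|x^{(n)}\|^2$ actually diverges. So the claim that ``the two tails are comparable to $\sum_n\prod_{k=1}^n(t_k/\lambda_1)^2$'' neither follows from your displayed bounds nor holds for the general solution.

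What is missing is the explicit eigenvector. You must arrange the partition so that block $n$ ($n\ge 0$) contains $t_{n+1}$ as a designated entry, block $-m$ ($m\ge 1$) contains $r_m$, and the shift carries the designated unit eigenvector of each block to that of the next (this is precisely what Example \ref{eg1} does via the entries $t_{n1}$ and the choice $x^{(0)}=e_0^{(0)}$). For $|\lambda|=\lambda_1$ the solution launched from that designated $x^{(0)}$ satisfies $\|x^{(n)}\|^2=\prod_{k=1}^n(t_k/\lambda_1)^2\,\|x^{(0)}\|^2$ and $\|x^{(-m)}\|^2=\prod_{k=1}^m(\lambda_1/r_k)^2\,\|x^{(0)}\|^2$ exactly, so hypothesis (iii) puts it in $\ell^2$ and gives $\lambda\in\sigma_p(UA)$. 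Without singling out this track, your argument does not establish the inclusion $\{\lambda:|\lambda|=\lambda_1\}\subseteq\sigma_p(UA)$, which is the whole content of the corollary; with it, the rest of your write-up goes through.
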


\begin{eg}

Let $A$ is a self-adjoint operator satisfying that
$\sigma(A)=\sigma_p(A)\cup \{1\}$, $\sigma_p(A)=\{t_{nk},
r_{nk}\}_{k,n=1}^\infty$, in which $t_{n1}=1-\frac{1}{n}$,
$t_{nk}=\frac{k+n-1}{k+n}+\frac{\frac{k+n}{k+n+1}-\frac{k+n-1}{k+n}}{k+n-1}\cdot
n$, $r_{n1}=1+\frac{1}{n}$,
$r_{nk}=\frac{k+n+1}{k+n}-\frac{\frac{k+n+1}{k+n}-\frac{k+n+2}{k+n+1}}{k+n-1}\cdot
n$ for $n\geq 1, k\geq 2$, for each $t\in \sigma_p(A)$, dim $\ker
(A-tI)=1$. By Corollary \ref{lem6}, and (1), (2) of Example
\ref{eg1}, we can get that  there exists a unitary spread operator
$U$ such that $\sigma_p(UA)=\{\lambda, |\lambda|=1\}$.
\end{eg}

\begin{rem}
By the Theorem \ref{lem5}, we know that if the essential spectrum of
self-adjoint operator $A$ has only two points $\lambda_1, \lambda_2$
and $0<\lambda_1<\lambda_2$ and  for each $t\in \sigma_p(A)$, dim
$\ker (A-tI)<\infty$, then for any ring $R$ in $R_{\lambda_1,
\lambda_2}^o$, there exists $UA\in \mathcal{O}_{u}(A)$ such that
$\sigma_S(UA)$ contains $R$. i.e. $\sigma_S(UA)$  has a certain
thickness, $\sigma_S(A)$ is thin. In other words, there is no ONB
such that $\lambda I-UA$ has a matrix representation as a Schauder
matrix for $\lambda\in R$.
\end{rem}

\section{No points spectrum in $\sigma(A)$}

In this section, we will research the case that there is no point
spectrum in $\sigma(A)$. i.e. $\sigma(A)=[\lambda_1, \lambda_2],
0<\lambda_1$.

According to Lemma \ref{propbj}, we know that for any unitary
operator $U$, there exists at most denumerable subsets $\sigma_1$ in
$R_{\lambda_1}$ and $\sigma_2$ in $R_{\lambda_2}$ such that
$\sigma_p(UA)\subseteq\sigma_1\cup\sigma_2\cup R_{\lambda_1,
\lambda_2}^o$. In this section, we will show that if
ker$(\lambda_i-A)=\infty, i=1,2$, then for
 any at most denumerable subsets $\sigma_1$ in
$R_{\lambda_1}$, $\sigma_2$ in $R_{\lambda_2}$ and a ring $R$ in
$R_{\lambda_1, \lambda_2}^o$, there exists a unity operator $U$ such
that $\sigma_p(UA)\subseteq\sigma_1\cup\sigma_2\cup R$. i.e. there
exists $UA\in \mathcal{O}_{u}(A)$ such that $\sigma_S(A)$ is thin
and $\sigma_S(UA)$ has a certain thickness.

\begin{thm}\label{lem4}
Assume that $A$ is a self-adjoint operator satisfying that
$\sigma(A)=[\lambda_1, \lambda_2]$, $\lambda_1>0$ and
$\sigma_p(A)=\varnothing$. Then, there exists a unitary spread
operator $U$ such that $\sigma_p(UA)=R$ for any rings $R$ in the
ring $R_{\lambda_1, \lambda_2}^o=\{\lambda,
|\lambda_1|<|\lambda|<\lambda_2\}$.
\end{thm}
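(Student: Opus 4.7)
The plan is to mimic the strategy of Proposition~\ref{lem2} and Theorem~\ref{lem5}: decompose $A$ as a direct sum $\bigoplus_{n\in\mathbb{Z}}A_n$ of bounded self-adjoint pieces acting on infinite-dimensional reducing subspaces $H_n$, pick ONBs $\{e_k^{(n)}\}_{k=1}^\infty$ in each $H_n$, and define a bilateral shift-type unitary spread by $Ue_k^{(n)}=e_k^{(n+1)}$. The crucial new ingredient here is that $\sigma_p(A)=\varnothing$ together with $\sigma(A)=[\lambda_1,\lambda_2]$ forces the spectral measure $E_A$ to be nonatomic with full support on $[\lambda_1,\lambda_2]$, so $E_A(I)$ has infinite-dimensional range for \emph{every} subinterval $I\subseteq[\lambda_1,\lambda_2]$ of positive length. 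This replaces the hypothesis ``$\dim\ker(\lambda_i-A)=\infty$'' used in the previous sections and grants complete freedom to prescribe the spectral supports $I_n$.

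Given a target ring $R=R_{\rho_1,\rho_2}^o$ with $\lambda_1\le\rho_1<\rho_2\le\lambda_2$, the core step is to construct a countable partition $\{I_n\}_{n\in\mathbb{Z}}$ of $[\lambda_1,\lambda_2]$ (up to a countable set of boundary points, which carries spectral measure zero) into subintervals of positive length such that $\sup I_n$ and $\inf I_n$ both tend to $\rho_1$ as $n\to+\infty$ and both tend to $\rho_2$ as $n\to-\infty$. A concrete recipe: use finitely many $I_n$ to cover the possibly nonempty extras $[\lambda_1,\rho_1]$ and $[\rho_2,\lambda_2]$, and tile the remaining $[\rho_1,\rho_2]$ by shrinking intervals accumulating at $\rho_1$ for large positive $n$ and at $\rho_2$ for large negative $n$. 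Setting $H_n=\mathrm{Ran}(E_A(I_n))$, $A_n=A|_{H_n}$ gives $\mathcal{H}=\bigoplus_n H_n$, $\sigma(A_n)\subseteq I_n$ and $\dim H_n=\infty$; then choose ONBs in each $H_n$ and define $U$ as above.

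The spectral analysis of $UA$ is essentially verbatim from the earlier proofs. The equation $UAx=\lambda x$ with $x=\sum_n x^{(n)}$ forces $\lambda x^{(n)}=A_{n-1}x^{(n-1)}$, hence $x^{(n)}=\lambda^{-n}A_{n-1}\cdots A_0 x^{(0)}$ for $n\ge 1$ and the analogous formula in the $A_k^{-1}$ for $n\le-1$. With $\alpha_n:=\inf I_n$ and $\beta_n:=\sup I_n$, the two-sided operator bound $\alpha_n\|y\|\le\|A_n y\|\le\beta_n\|y\|$ on $H_n$ yields
$$
|\lambda|^{-n}\prod_{k=0}^{n-1}\alpha_k\,\|x^{(0)}\|\le\|x^{(n)}\|\le|\lambda|^{-n}\prod_{k=0}^{n-1}\beta_k\,\|x^{(0)}\|,\qquad n\ge 1,
$$
and symmetric bounds for $n\le-1$. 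Since $\alpha_n,\beta_n\to\rho_1$ as $n\to+\infty$, Ces\`aro's lemma applied to $\log\alpha_n$ and $\log\beta_n$ gives $(\prod_{k=0}^{n-1}\alpha_k)^{1/n},(\prod_{k=0}^{n-1}\beta_k)^{1/n}\to\rho_1$, so by the root test $\sum_{n\ge 1}\|x^{(n)}\|^2<\infty$ iff $|\lambda|>\rho_1$; symmetrically $\sum_{n\le-1}\|x^{(n)}\|^2<\infty$ iff $|\lambda|<\rho_2$. This gives $\sigma_p(UA)=R$.

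The only real obstacle is the partition construction in the second paragraph: two independent accumulation points $\rho_1,\rho_2$ must be accommodated while still tiling $[\lambda_1,\lambda_2]$, which is why the proofs of Proposition~\ref{lem2} and Theorem~\ref{lem5} needed the elaborate reindexing $J_n\mapsto I_n$. The finitely many intervals absorbing the ``extra'' pieces contribute only a bounded multiplicative constant to $\prod_{k=0}^{n-1}\alpha_k$, which drops out of the $n$th-root limit, so beyond this bookkeeping the argument is a direct transcription of the earlier proofs.
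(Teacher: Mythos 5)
Your proposal follows the paper's own proof essentially verbatim: the same spectral-interval decomposition $A=\bigoplus_{n\in\mathbb{Z}}A_n$ with subintervals accumulating at the two radii from inside, the same bilateral-shift unitary spread $Ue_k^{(n)}=e_k^{(n+1)}$, and the same product estimates on $\|x^{(n)}\|$ to show the eigenvector series converges exactly on the open ring. The only differences are cosmetic: you justify explicitly why each spectral subspace is infinite-dimensional (the paper merely asserts it) and you treat a general ring $R_{\rho_1,\rho_2}^o$ directly, whereas the paper writes out only the case $R=R_{\lambda_1,\lambda_2}^o$ and defers the general case to the argument of Proposition~\ref{lem2}.
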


\begin{proof}

There is a sequence $\alpha_n\longrightarrow\lambda_2$ such that
$\alpha_{n+1}
>\alpha_{n}$ for each $n\geq 1$. Moreover, the range of spectral projection
$E_{[\alpha_n,\alpha_{n+1}]}$ is an infinite subspace;  and a
sequence $\beta_n\longrightarrow\lambda_1$ such that $\beta_{n}
>\beta_{n+1}$ for each $n\geq 1$. Moreover, the range of spectral projection
$E_{[\beta_{n+1},\beta_{n}]}$ is an infinite subspace.

Now we rearrange these intervals as follows.

$$J_n=[\alpha_n,\alpha_{n+1}), n\geq 0,$$
$$J_n=[\beta_{-n+1},\beta_{-n}), n\leq -1.$$

Let $I_0=J_0, I_n=J_{-n+1}$ for $n\geq 1$, $I_{-1}=J_{-1},
I_n=J_{-n}$ for $n\leq -1$.

Denote $E_n=E_{I_n}$ the spectral projection on the interval $I_n$
and by $H_n=$Ran$(E_n)$ for $n\in \mathbb{Z}$. Now we choose an ONB
$\{e_k^{(n)}\}_{k=1}^\infty$, for each $n\in \mathbb{Z}$. Since each
$H_n$ is a reducing subspace of $A$, we can write $A$ into the
direct sum:
$$A=\oplus_{n=-\infty}^{+\infty}A_n.$$
And $\alpha_0||x||\leq||A_0x||\leq\alpha_1$ for $x\in H_{0}$,
$\beta_{0}|x||\leq||A_{-1}x||\leq\beta_{-1}||x||$ for $x\in H_{-1}$,
$\beta_{-n}||x||\leq||A_nx||\leq\beta_{-n-1}$ for $x\in H_{n}$,
$n\geq 1$, $\alpha_n||x||\leq||A_0x||\leq\alpha_{n+1}$ for $x\in
H_{n}$ $n\leq -1$.

Now let $U$ be the unitary spread operator defined as

$$Ue_k^{(n)}=e_k^{(n+1)}, n\in\mathbb{Z}, k\in \mathbb{N}.$$

For a vector $x\in \mathcal{H}$ now under the ONB constructed it has
a $l_2$-sequence coordinate in the form

$$x=\sum_{n\in\mathbb{Z}}x^{(n)}=\sum_{n\in\mathbb{Z}}\sum_{k=1}^{\infty}x_k^{(n)}e_k^{(n)}.$$

Now simply we have

$$UAx=UA(\sum_{n\in\mathbb{Z}}x^{(n)})=\sum_{n\in\mathbb{Z}}UAx^{(n)}=\sum_{n\in\mathbb{Z}}A_{n-1}x^{(n-1)}.$$

Now suppose for some $\lambda\neq 0$ we do have some vector $x$ such
that $(\lambda I-UA)x=0$, then we have

$$\lambda x^{(n)}=A_{n-1}x^{(n-1)}.$$

Therefore, following equations hold:

$$x^{(n)}=\lambda^{-n}A_{n-1}A_{n-2}\cdots A_0 x^{(0)}, n\geq 1,$$

$$x^{(n)}=\lambda^{-n}A_{n}^{-1}A_{n+1}^{-1}\cdots A_{-1}^{-1}x^{(0)}, n\leq -1.$$

Since $\alpha_0||x||\leq||A_0x||\leq\alpha_1$ for $x\in H_{0}$,
$\beta_{0}|x||\leq||A_{-1}x||\leq\beta_{-1}||x||$ for $x\in H_{-1}$,
$\beta_{-n}||x||\leq||A_nx||\leq\beta_{-n-1}$ for $x\in H_{n}$,
$n\geq 1$, $\alpha_n||x||\leq||A_0x||\leq\alpha_{n+1}$ for $x\in
H_{n}$ $n\leq -1$ and $\beta_n\longrightarrow\lambda_1,
\alpha_n\longrightarrow\lambda_2$, it is easy to see that if
$\lambda_1<|\lambda|<\lambda_2$ then
$\sum_{n\in\mathbb{Z}}||x^{(n)}||^2<\infty$; if
$\lambda_1\leq|\lambda|$ there exists $N$ such that $||x^{(n)}||>1$
for $n>N$, if $|\lambda|\geq\lambda_2$ there exists $N$ such that
$||x^{(-n)}||>1$ for $n>N$, i.e. if $\lambda_1<|\lambda|$ or
$|\lambda|>\lambda_2$ then
$\sum_{n\in\mathbb{Z}}||x^{(n)}||^2=\infty$. Hence,
$\sigma_p(UA)=\{\lambda, |\lambda_1|<|\lambda|<\lambda_2\}$.

The proof of the more general situation is similar to the
Proposition \ref{lem2}.
\end{proof}

\begin{rem}
By the Theorem \ref{lem4}, we know that if $A$ is a self-adjoint
operator satisfying that $\sigma(A)=[\lambda_1, \lambda_2]$,
$\lambda_1>0$ and $\sigma_p(A)=\varnothing$, then for any ring $R$
in $R_{\lambda_1, \lambda_2}^o$, there exists $UA\in
\mathcal{O}_{u}(A)$ such that $\sigma_S(UA)$ contains $R$. i.e.
$\sigma_S(UA)$  has a certain thickness, $\sigma_S(A)$ is thin. In
other words, there is no ONB such that $\lambda I-UA$ has a matrix
representation as a Schauder matrix for $\lambda\in R$.
\end{rem}

Trivial modifications adapt the proof of the Theorems of
\ref{prop1}, \ref{lem5} and \ref{lem4}, we can get the following
proposition:

\begin{prop}\label{thm1}
Assume that $A$ is a self-adjoint operator.

\textup{(i)} If $\sigma(A)\subseteq[\lambda_1, \lambda_2],
\lambda_1>0$ and $\lambda_1,\lambda_2\in \sigma_e(A)$, then there
exists a unitary spread operator $U$ such that $\sigma_p(UA)=R$ for
any rings $R$ in the ring $R_{\lambda_1, \lambda_2}^o$;

\textup{(ii)} If $\lambda_1,\lambda_2\in \sigma_e(A)$ and
$0<\lambda_1<\lambda_2$, then there exists a unitary spread operator
$U$ such that $\sigma_p(UA)\supseteq R$ for any rings $R$ in the
ring $R_{\lambda_1, \lambda_2}^o$. Moreover, if there exist sequence
$\{t_k\}$ and $\{r_k\}$ contained in $\sigma(A)$ and satisfy that
 $t_k > t_{k+1}, r_k < r_{k+1}$ for all $k$, then there
exists a unitary spread operator $U$ such that $\sigma_p(UA)=R$ for
any rings $R$ in the ring $R_{\lambda_1\lambda_2}=\{\lambda,
|\lambda_1|<|\lambda|<\lambda_2\}$; for any unitary operator $U$,
$\sigma_p(UA)\subset \{\lambda,
|\lambda_1|\leq|\lambda|\leq\lambda_2\}$ and
 $\textup{Card}\{\sigma_p(UA)\cap
R_{\lambda_i}\}\leq\textup{dim Ker}(\lambda_iI-A), i=1,2$;

\textup{(iii)} If there exists only one point $\lambda_1\in
\sigma_e(A)$,  $\{t_k\}$ and $\{r_k\}$ contained in $\sigma(A)$
 and satisfy that $t_k <t_{k+1}, r_k > r_{k+1}$, $t_k\rightarrow
\lambda_1, r_k\rightarrow \lambda_2$, and
$\sum_{n=1}^\infty\prod_{k=1}^n(\frac{t_k}{\lambda_1})^2<\infty$,
$\sum_{n=1}^\infty\prod_{k=1}^n(\frac{\lambda_1}{r_k})^2<\infty$.
Then there exists a unitary spread operator $U$ such that
$\sigma_p(UA)=\{\lambda, |\lambda|=\lambda_1\}$.
\end{prop}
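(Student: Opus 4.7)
The plan is to follow the template of Theorems \ref{prop1}, \ref{lem5}, and \ref{lem4}: invoke the spectral theorem to write $A = \bigoplus_{n \in \mathbb{Z}} A_n$ along a family of reducing subspaces $H_n$, fix an ONB $\{e_k^{(n)}\}_{k=1}^\infty$ of each $H_n$, and define the shift-style unitary spread $U e_k^{(n)} = e_k^{(n+1)}$. The central consequence, established identically in each of the three earlier theorems, is that a putative eigenvector $x$ of $UA$ with eigenvalue $\lambda \neq 0$ must satisfy the recursion $x^{(n)} = \lambda^{-n} A_{n-1} A_{n-2} \cdots A_0 \, x^{(0)}$ for $n \geq 1$ and the analogous formula with $A_j^{-1}$ for $n \leq -1$. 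Whether $\lambda \in \sigma_p(UA)$ therefore reduces to whether the series $\sum_{n \in \mathbb{Z}} \|x^{(n)}\|^2$ converges, which is governed by how the operator norms of $A_n$ (and of $A_n^{-1}$) compare with $|\lambda|$ as $n \to \pm \infty$.

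For parts (i) and (ii), the hypothesis $\lambda_1, \lambda_2 \in \sigma_e(A)$ guarantees that each $\lambda_i$ is either an accumulation point of $\sigma(A)$ or an eigenvalue of infinite multiplicity, and in either case I can extract an orthogonal sequence of infinite-dimensional reducing subspaces whose operator-norm bounds tend to $\lambda_1$ as $n \to +\infty$ and to $\lambda_2$ as $n \to -\infty$. Arranging these exactly as in Theorem \ref{lem5} and invoking the rearrangement device of Proposition \ref{lem2} to substitute arbitrary interior constants for $\lambda_1, \lambda_2$ yields $\sigma_p(UA) \supseteq R$ for any ring $R \subset R_{\lambda_1, \lambda_2}^o$. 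In (i), the additional hypothesis $\sigma(A) \subseteq [\lambda_1, \lambda_2]$ forces every $\|A_n\|$ and $\|A_n^{-1}\|^{-1}$ to lie in $[\lambda_1, \lambda_2]$, and combined with Lemma \ref{propbj}(ii) the inclusion is upgraded to equality. In the moreover clause of (ii), the monotonicity $t_k > t_{k+1}$, $r_k < r_{k+1}$ again pins all chosen spectral intervals inside $[\lambda_1, \lambda_2]$, giving the same equality, while the final claim about $\textup{Card}\{\sigma_p(UA) \cap R_{\lambda_i}\}$ is a direct citation of Lemma \ref{propbj}(ii).

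For part (iii), $\lambda_1$ is the only essential spectrum point and the sequences $\{t_k\}, \{r_k\}$ accumulate at $\lambda_1$ from below and above respectively. Following the construction of Corollary \ref{lem6}, I reindex these eigenvalues into double sequences $\{t_{nk}\}, \{r_{nk}\}$, assemble the reducing subspaces $H_n$ and the shift $U$ as before, and then plug into the product formula. The summability hypotheses $\sum_n \prod_{k=1}^n (t_k/\lambda_1)^2 < \infty$ and $\sum_n \prod_{k=1}^n (\lambda_1/r_k)^2 < \infty$ are tailored so that at $|\lambda| = \lambda_1$ the series $\sum_n \|x^{(n)}\|^2$ converges, while for $|\lambda| \neq \lambda_1$ one of the two tails diverges; hence $\sigma_p(UA) = \{\lambda : |\lambda| = \lambda_1\}$.

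The main technical obstacle, beyond the bookkeeping of subsequences and intervals, is guaranteeing that the reducing subspaces $H_n$ can simultaneously be chosen \emph{infinite-dimensional} and be made to meet the prescribed norm-bound profile. When $\lambda_i$ is an accumulation point of $\sigma(A)$ one simply picks nested spectral intervals whose spectral projections have infinite-dimensional range; when $\lambda_i$ is an isolated eigenvalue of infinite multiplicity, one must first split its eigenspace into countably many infinite-dimensional pieces, and absorb these into the sequence $\{A_n\}$ without disturbing the limiting behavior. Once this splitting is in place, the remainder of the argument is a literal adaptation of the proofs of Theorems \ref{prop1}, \ref{lem5}, and \ref{lem4}.
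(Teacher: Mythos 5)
Your proposal follows essentially the same route as the paper: the paper's entire ``proof'' of this proposition is the single remark that trivial modifications of the arguments for Theorems~\ref{prop1}, \ref{lem5} and \ref{lem4} (and Corollary~\ref{lem6}) yield the result, and your outline is precisely that adaptation --- the spectral decomposition $A=\oplus_n A_n$, the shift-type unitary spread, the recursion $x^{(n)}=\lambda^{-n}A_{n-1}\cdots A_0x^{(0)}$, and the convergence/divergence analysis of $\sum_n\|x^{(n)}\|^2$. In fact you supply more detail than the paper does, in particular by flagging the one genuine issue (that an essential spectrum point may be an isolated eigenvalue of infinite multiplicity rather than an accumulation point, requiring the eigenspace to be split into infinitely many infinite-dimensional blocks), so the proposal is correct at least to the standard of rigor of the original.
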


%\begin{proof}
%
%Case \textup{(i)}. The proof is trivial modifications adapt the
%proof of the????.
%
%
%Case \textup{(ii)}. The first parts of this case is trivial
%modifications adapt the proof of the????. We only need to prove the
%last parts of this case.
%
%In fact, Let $I_1=\{t_k\}\cup\{r_k\}, I_2=\sigma(A)\backslash I_1$.
%Denote $E_n=E_{I_n}$ the spectral projection on the interval $I_n$
%and by $H_n=$Ran$(E_n)$ for $n=1, 2$.  Since each $H_n$ is a
%reducing subspace of $A$, we can write $A$ into the direct sum:
%$A=A_1\oplus A_2.$
%
%
%
%
%\end{proof}

%\section{Compact perturbations of A}
%
%
%It is well known that  index is stable under compact perturbations,
%i.e., ind$(A+K)=$ind$A$ for all $A\in B(\H)$ and $K$ is a compact
%operator; and
%$\partial\sigma(A)\subseteq\sigma_{lre}(A)\cup\sigma_0(A)$. Hence,
%if $A$ is a self-adjoint operator satisfying that
%$\sigma(A)=\sigma_e(A)=\{\lambda_1,\lambda_2\}$ or
%$\sigma(A)\subseteq[m, M]$. Then for any compact operator $K$, the
%difference between $\sigma(A)$ and $\sigma(A+K)$ as
%$\{\eta_k\}_{k\in K}$(where either $K=\{1,2,\cdots,r\}$ or $K$ is
%the set of all natural numbers) is an at most denumerable subset of
%$\mathbb{C}\backslash\sigma(A)$ with accumulation point in
%$\sigma_e(A)$.

As we know,
$\sigma(T)\supset\sigma_S(T)=\sigma_p(T)\cup\{\lambda\in\mathbb{C},
\overline{\textup{Ran}(\lambda I-T)}\neq\mathcal{H}\}$ for every
$T\in B(\mathcal{H})$. Hence, by the Proposition \ref{thm1}, we
obtain the main theorem:

\begin{thm}\label{thm2}
Assume that $A$ is a self-adjoint Schauder operator.

\textup{(i)} If $\sigma(A)\subseteq[\lambda_1, \lambda_2],
\lambda_1>0$ and $\lambda_1,\lambda_2\in \sigma_e(A)$, then there
exists a unitary spread operator $U$ such that the Schauder spectrum
 $\sigma_S(UA)\supseteq R$ for any rings $R$ in the ring $R_{\lambda_1,
\lambda_2}^o$;

\textup{(ii)} If $\lambda_1,\lambda_2\in \sigma_e(A)$ and
$0<\lambda_1<\lambda_2$, then there exists a unitary spread operator
$U$ such that the Schauder spectrum $\sigma_S(UA)\supseteq R$ for
any rings $R$ in the ring $R_{\lambda_1, \lambda_2}^o$;

\textup{(iii)} If there exists only one point $\lambda_1\in
\sigma_e(A)$,  $\{t_k\}$ and $\{r_k\}$ contained in $\sigma(A)$
 and satisfy that $t_k <t_{k+1}, r_k > r_{k+1}$, $t_k\rightarrow
\lambda_1, r_k\rightarrow \lambda_1$, and
$\sum_{n=1}^\infty\prod_{k=1}^n(\frac{t_k}{\lambda_1})^2<\infty$,
$\sum_{n=1}^\infty\prod_{k=1}^n(\frac{\lambda_1}{r_k})^2<\infty$.
Then there exists a unitary spread operator $U$ such that  the
Schauder spectrum $\sigma_S(UA)\supseteq\{\lambda,
|\lambda|=\lambda_1\}$.

\end{thm}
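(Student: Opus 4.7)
The plan is to deduce Theorem \ref{thm2} as a direct corollary of Proposition \ref{thm1}. The introduction already records the elementary inclusion
$$\sigma(T) \supset \sigma_S(T) = \sigma_p(T) \cup \sigma_r(T) \supseteq \sigma_p(T),$$
valid for every $T \in B(\mathcal{H})$, where $\sigma_r(T) = \{\lambda : \overline{\textup{Ran}(\lambda I - T)} \neq \mathcal{H}\}$. Hence any subset of $\sigma_p(UA)$ is automatically a subset of $\sigma_S(UA)$, and the three clauses of Theorem \ref{thm2} will follow from the three clauses of Proposition \ref{thm1} applied to the same $A$.

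Concretely, for clause (i) I would fix a ring $R \subset R^o_{\lambda_1,\lambda_2}$ and invoke Proposition \ref{thm1}(i) to produce a unitary spread $U$ with $\sigma_p(UA) = R$; then $\sigma_S(UA) \supseteq \sigma_p(UA) = R$. Clause (ii) is handled identically: Proposition \ref{thm1}(ii) yields a unitary spread $U$ with $\sigma_p(UA) \supseteq R$, and the desired inclusion $\sigma_S(UA) \supseteq R$ is automatic. For clause (iii), under the summability conditions on $\{t_k\}$ and $\{r_k\}$, Proposition \ref{thm1}(iii) produces a unitary spread $U$ with $\sigma_p(UA) = \{\lambda : |\lambda| = \lambda_1\}$, whence $\sigma_S(UA) \supseteq \{\lambda : |\lambda| = \lambda_1\}$. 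Since the hypotheses in each clause of Theorem \ref{thm2} match verbatim those of the corresponding clause of Proposition \ref{thm1}, no additional verification is needed.

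The ``main obstacle'' is therefore not in this final deduction --- it is essentially one line once Proposition \ref{thm1} is in hand --- but rather in Proposition \ref{thm1} itself, whose proof combines Theorems \ref{prop1}, \ref{lem5}, and \ref{lem4}. That construction decomposes $A$ as an orthogonal sum $\bigoplus_{n \in \mathbb{Z}} A_n$ along spectral projections adapted to the accumulation behaviour near $\lambda_1$ and $\lambda_2$, defines $U$ as the bilateral shift $U e_k^{(n)} = e_k^{(n+1)}$ of a shared ONB, and solves the eigenequation $UAx = \lambda x$ via the product formulas $x^{(n)} = \lambda^{-n} A_{n-1} \cdots A_0 x^{(0)}$ for $n \geq 1$ (and the analogous formula for $n \leq -1$), so that $x \in \ell^2$ precisely for $\lambda$ in the target ring or circumference. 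Given Proposition \ref{thm1}, Theorem \ref{thm2} requires no further work beyond invoking $\sigma_S \supseteq \sigma_p$.
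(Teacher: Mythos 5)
Your proposal matches the paper's own derivation exactly: the authors state the inclusion $\sigma_S(T)=\sigma_p(T)\cup\{\lambda:\overline{\textup{Ran}(\lambda I-T)}\neq\mathcal{H}\}\supseteq\sigma_p(T)$ immediately before Theorem \ref{thm2} and obtain it as a direct consequence of Proposition \ref{thm1}, which is precisely your argument. You also correctly locate the real work in Proposition \ref{thm1} (via Theorems \ref{prop1}, \ref{lem5}, and \ref{lem4}), so nothing further is needed.
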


According to the Proposition \ref{thm1} and Theorem \ref{thm2}, we
know that if a self-adjoint operator $A$ has more than one points in
its essential spectrum, then there exists a unitary spread operator
$U$ such that $\sigma_p(UA)$ contains a ring which is depended by
the essential spectrum, i.e. there exists $UA\in \mathcal{O}_{u}(A)$
such that $\sigma_S(A)$ is thin and $\sigma_S(UA)$ has a certain
thickness; if there is only one point in the essential spectrum and
satisfies some conditions, then there exists a unitary spread
operator $U$ such that $\sigma_p(UA)$ contains the circumference
which is depended by the essential spectrum, i.e. there exists
$UA\in \mathcal{O}_{u}(A)$ such that $\sigma_S(A)$ is at most
denumerable and $\sigma_S(UA)$ is uncountable. Furthermore, by Lemma
\ref{propbj}, we know that if $\sigma_e(A)$ has only one point
$\lambda_1$ and $\{t_k\}$ (or $\{r_k\}$) contained in $\sigma(A)$
and satisfy that $t_k <t_{k+1}$ (or $r_k > r_{k+1}$),
$t_k\rightarrow \lambda_1$(or $r_k\rightarrow \lambda_1$), then for
any unity operator $U$, $\sigma_p(UA)\neq R_{\lambda_1}$.
 However, we don't know if there exist  $\{t_k\}$ and $\{r_k\}$ contained in $\sigma(A)$
and satisfy that $t_k <t_{k+1}, r_k > r_{k+1}$, $t_k\rightarrow
\lambda_1, r_k\rightarrow \lambda_2$, does there exist a unitary
operator $U$ such that $\sigma_p(UA)=\{\lambda,
|\lambda|=\lambda_1\}$. It is easy to know that if $A=\lambda I$,
then the point spectrum of $UA$ is at most denumerable for any
unitary operator. We call a normal operator $A$ is {\it
non-trivial}, if $A\neq \lambda I$ for any $\lambda\in\mathbb{C}$.
Hence, we have the following question:

\begin{ques}
Assume that $A$ is a non-trivial invertible self-adjoint operator,
and there exists only one point $\lambda_1\in \sigma_e(A)$,
 $\{t_k\}$ and $\{r_k\}$ contained in $\sigma(A)$
and satisfy that $t_k <t_{k+1}, r_k > r_{k+1}$, $t_k\rightarrow
\lambda_1, r_k\rightarrow \lambda_2$. Whether there must be a unity
operator $U$ such that $\sigma_p(UA)=\{\lambda,
|\lambda|=\lambda_1\}$?

\end{ques}

\nocite{liyk2,liyk/kua1}
%\bibliographystyle{plain}
%\bibliography{../../library/biblio}

\end{document}